\theoremstyle{plain}
\newtheorem{cor}[subsection]{Corollary}
\newtheorem{lem}[subsection]{Lemma}
\newtheorem{thm}{Theorem}
\newtheorem{prop}[subsection]{Proposition}
\theoremstyle{definition}
\theoremstyle{remark}
\newtheorem{rem}[subsection]{Remark}
\newcommand{\Z}{\mathbb{Z}}
\newcommand{\C}{\mathbb{C}}
\newcommand{\lp}{_{(p)}}
\newcommand{\Zp}{\mathbb{Z}_{(p)}}
\newcommand{\de}{\delta}
\newcommand{\opn}{\operatorname}
\numberwithin{equation}{section}
\title[The $p$-primary subgroup of $H^*(BPU_n)$]{The $p$-primary subgroups of the cohomology of $BPU_n$ in dimensions less than $2p+5$}
\author{Xing Gu}
\author{Yu Zhang}
\author{Zhilei Zhang}
\author{Linan Zhong$^*$}
\address{Center for Topology and Geometry based Technology, School of Mathematical Sciences, Hebei Normal University}
\email{gux2006@outlook.com}
\address{Department of Mathematics, Nankai University}
\email{zhang.4841@osu.edu}
\address{Department of Mathematics, Nankai University}
\email{15829207515@163.com}
\address{Department of Mathematics, Yanbian University}
\email{lnzhong@ybu.edu.cn}
\subjclass[2020]{55T10, 55R35, 55R40}
\thanks{}
\thanks{$*$ Corresponding author}
\begin{document}

\maketitle

\begin{abstract}
    Let $PU_n$ denote the projective unitary group of rank $n$ and $BPU_n$ be its classifying space. For an odd prime $p$, we extend previous results to a complete description of $H^s(BPU_n;\Z)\lp$ for $s<2p+5$ by showing that the $p$-primary subgroups of $H^s(BPU_n;\Z)$ are trivial for $s = 2p+3$ and $s = 2p+4$. 
\end{abstract}

\section{Introduction}\label{sec:intro}

Let $U_n$ denote the group of $n \times n$ unitary matrices. The unit circle $S^1$ can be viewed as the normal subgroup of scalar matrices of $U_n$.  We let $PU_n$ denote the quotient group of $U_n$ by $S^1$, and $BPU_n$ be the classifying space of $PU_n$. In this paper we consider $H^*(BPU_n;\Z)$, th7e ordinary cohomology of $BPU_n$ with coefficients in $\Z$.

\subsection*{A review of the literature} The ordinary and generalized cohomology of $BPU_n$ for special $n$ has been the subject of various works such as Kono-Mimura \cite{kono1975cohomology}, Kameko-Yagita \cite{Kameko2008brown}, Kono-Yagita \cite{kono1993brown}, Toda \cite{toda1987cohomology}, and Vavpeti{\v{c}}-Viruel \cite{vavpetivc2005mod}. Vezzosi \cite{vezzosi2000chow} and Vistoli \cite{vistoli2007cohomology} studied the Chow ring of the classifying space (in the sense of Totaro \cite{totaro1999chow}) of $BPGL_3(\C)$ and $BPGL_p(\C)$ for $p$ an odd prime, respectively. Much of their results applies to the ordinary cohomology of $BPU_p$. 

None of the works above dealt with $H^*(BPU_n;\Z)$ for $n$ not a prime number. The first named author considered $H^*(BPU_n;\Z)$, as well as the Chow ring of $BPGL_n(\C)$ for an arbitrary $n$ in \cite{gu2020almost}, \cite{gu2019cohomology} and  \cite{gu2019some}. In particular, in \cite{gu2019cohomology}, the first named author determined the ring structure of $H^*(BPU_n;\Z)$ in dimensions less than or equal to $10$.

Other related works include Duan \cite{duan2017cohomology}, in which the integral cohomology of $PU_n$ is fully determined, and Crowley-Gu \cite{crowley2021h}, in which the image of the canonical map $H^*(BPU_n;\Z)\to H^*(BU_n;\Z)$ is studied.

The cohomology of $BPU_n$ plays significant roles in the study of the topological period-index problem (\cite{antieau2014period}, \cite{antieau2014topological}, \cite{gu2019topological} and \cite{gu2020topological}), and in the study of anormalies in partical physics (\cite{cordova2020anomalies}, \cite{garcia2019dai}).

\subsection*{Notations} Throughout the rest of this paper, $H^* (-)$ denotes $H^*(-;\Z)$. For an abelian group $A$ and a prime number $p$, let $A\lp$ be the localization of $A$ at $p$, and let $_pA$ denotes the $p$-primary subgroup of $A$, i.e., the subgroup of $A$ of all torsion elements with torsion order a power of $p$. In particular, we have a canonical isomorphism $_pH^*(-)\cong {_p[H^*(-)\lp]}$, and we will not distinguish the two throughout this paper. Tensor products of $\Z\lp$-modules are always taken over $\Z\lp$.

\subsection*{The main theorem and some remarks} We review a basic fact on the cohomology of $BPU_n$. Consider the short exact sequence of Lie groups
\begin{equation*}
    1 \to \Z/ n \to SU_n \to PSU_n \simeq PU_n \to 1,
\end{equation*}
which induces a fiber sequence of their classifying spaces
\begin{equation}\label{eq:Bn_cover}
    B(\Z/ n) \to BSU_n \to BPU_n
\end{equation}

When $p \nmid n$, the space $B(\Z/ n)$ is $p$-locally contractible and we have
\begin{equation}
    H^*(BPU_n;\Z\lp) \cong H^{*}(BSU_{n};\Z\lp)
\end{equation}
Since $\Z\lp$ is a flat $\Z$-module, and in particular, $H^*(-;\Z\lp)\cong H^*(-)\lp$, we have an isomorphism of $\Zp$-algebras
\begin{equation}\label{equation: cohomology of BSUn}
    H^*(BPU_n)\lp \cong H^{*}(BSU_{n})\lp = \Zp[c_2,c_3,\dots,c_n],
\end{equation}
which shows $H^*(BPU_n)\lp$ is torsion-free for $p \nmid n$.  In other words, we have the following
\begin{prop}\label{pro:n-torsion}
    Suppose $x\in H^*(BPU_n)$ is a torsion class. Then there exists some $i\geq 0$ such that $n^ix = 0$.
\end{prop}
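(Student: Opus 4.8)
The plan is to reduce the statement to the torsion-freeness in \eqref{equation: cohomology of BSUn} by decomposing the finite cyclic subgroup generated by $x$ into its primary parts. Since $x$ is a torsion class, there is a minimal positive integer $m$ with $mx=0$, so $\langle x\rangle\cong\Z/m$. Writing $m=p_1^{a_1}\cdots p_k^{a_k}$ as a product of powers of distinct primes and applying the Chinese Remainder Theorem, I would express $x=x_1+\cdots+x_k$ with each $x_j\in{}_{p_j}H^*(BPU_n)$; concretely one may take $x_j=(m/p_j^{a_j})\,t_j\,x$, where the integers $t_j$ are chosen by B\'ezout so that $\sum_j (m/p_j^{a_j})t_j\equiv 1\pmod m$. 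Then $p_j^{a_j}x_j=t_j(mx)=0$, so $x_j$ is indeed $p_j$-primary torsion, and $\sum_j x_j=x$.

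Next I would show that $x_j=0$ whenever $p_j\nmid n$. For such a prime, \eqref{equation: cohomology of BSUn} identifies $H^*(BPU_n)$ localized at $p_j$ with the polynomial ring $\mathbb{Z}_{(p_j)}[c_2,\dots,c_n]$, which is torsion-free over $\mathbb{Z}_{(p_j)}$, hence torsion-free as an abelian group. By the canonical isomorphism ${}_{p_j}H^*(BPU_n)\cong{}_{p_j}\bigl[H^*(BPU_n)_{(p_j)}\bigr]$ recorded in the Notations, the $p_j$-primary subgroup ${}_{p_j}H^*(BPU_n)$ vanishes, and therefore $x_j=0$.

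Consequently only the primes $p_j$ dividing $n$ survive, so $x=\sum_{p_j\mid n}x_j$ with $p_j^{a_j}x_j=0$ for each such $j$. Choosing $i$ large enough that $p_j^{a_j}\mid n^i$ for every $j$ with $p_j\mid n$ — for instance $i=\max_j a_j$ works, since each such $p_j$ already divides $n$ — we get $n^i x_j=0$ for all surviving $j$, hence $n^i x=0$, which is the claim.

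As for the main obstacle: the argument is essentially routine once \eqref{equation: cohomology of BSUn} is in hand, and the only point requiring a little care is the passage between the $p$-primary torsion of $H^*(BPU_n)$ and that of its localization at $p$, which is precisely the isomorphism isolated in the Notations; so there is no real difficulty. One could instead phrase the whole proof by first noting that $H^*(BPU_n)$ is finitely generated in each degree (as $BPU_n$ has the homotopy type of a CW complex of finite type) and splitting its torsion subgroup into primary components, but the primary decomposition of $\langle x\rangle$ makes even this appeal unnecessary.
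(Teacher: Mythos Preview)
Your argument is correct and is exactly the reasoning the paper has in mind: the paper does not give a separate proof but presents Proposition~\ref{pro:n-torsion} as a restatement (``In other words\dots'') of the torsion-freeness of $H^*(BPU_n)\lp$ for $p\nmid n$ established in \eqref{equation: cohomology of BSUn}. You have simply made explicit the standard primary-decomposition step that turns this torsion-freeness into the conclusion $n^i x=0$.
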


Therefore, to determine the graded abelian group structure of $H^s(BPU_n)$, it suffices to consider the $p$-primary subgroup $_pH^s(BPU_n)$ for $p\mid n$. 

\begin{rem}\label{rem:Vezzosi}
    In the case of Chow rings, Vezzosi \cite{vezzosi2000chow} proved the stronger result that all torsion classes in the Chow ring of $BPGL_n(\mathbb{C})$ are $n$-torsion.
\end{rem}

To state the main theorem, recall that, as shown in \cite{gu2019cohomology}, the integral cohomology group $H^3(BPU_n)$ is generated by a class denoted by $x_1$. In addition, $\opn{P}^i$ will denote the $i$th Steenrod reduced power operation, and   
\[\delta: H^*(-,\Z/p)\to H^{* + 1}(-)\]
will denote the connecting homomorphism. Finally, a bar over an integral cohomology class will denote the mod $p$ reduction of this class. For instance, $\bar{x}_1$ denotes the mod $p$ reduction of $x_1$, which is in $H^3(BPU_n;\Z/p)$.

\begin{thm}\label{thm: main thm, 2p+3 and 2p+4 have trivial p torsion}
Let $p > 2$ be a prime number, and $n=p^rm$ for a positive integer $m$ co-prime to $p$. Then the $p$-primary subgroup of $H^{s}(BPU_n)$ in dimensions less than $2p+5$ is as follows:
\begin{enumerate}
    \item For $r > 0$, we have 
    \begin{equation*}
    _pH^s(BPU_n)\cong
    \begin{cases}
        \Z/p^r,\ s=3,\\
        \Z/p,\ s=2p+2,\\
        0,\  s<2p+5,\ s\neq 3, 2p+2.
    \end{cases}
    \end{equation*}
    The group $_pH^{2p + 2}(BPU_n)$ is generated by $\delta\opn{P}^1(\bar{x}_1)$. 
    \item For $r = 0$, we simply have $_pH^s(BPU_n) = 0$ for all $s\geq 0$. 
\end{enumerate}

\end{thm}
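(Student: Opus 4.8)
The plan is to run the Serre spectral sequence of the fibration $BU_n\to BPU_n\xrightarrow{\ \chi\ }K(\Z,3)$ with $\Z\lp$-coefficients, as in the earlier papers, but now carried two total degrees further. Part (2) requires nothing new: for $p\nmid n$ the ring $H^*(BPU_n)\lp\cong\Z\lp[c_2,\dots,c_n]$ of \eqref{equation: cohomology of BSUn} is torsion-free, so $_pH^s(BPU_n)=0$ for every $s$. From now on fix $r>0$; then $p\mid n$, hence $n\ge p$ and the Chern classes $c_1,\dots,c_p$ are all available. I will freely use the following facts established in the earlier work: $\chi^*(\iota_3)=x_1$, where $\iota_3$ generates $H^3(K(\Z,3))$; in the range $s<2p+5$ one has $H^s(K(\Z,3))\lp\cong\Z\lp$ for $s=0,3$, while $H^{2p+2}(K(\Z,3))\lp\cong\Z/p$ is generated by $\iota_{2p+2}:=\delta\opn{P}^1(\bar\iota_3)$, $H^{2p+5}(K(\Z,3))\lp\cong\Z/p$, and $H^s(K(\Z,3))\lp=0$ for all other $s$; the transgression $d_3$ is the $\iota_3$-linear derivation determined, via the principal $K(\Z,2)$-bundle structure of $BU_n\to BPU_n$ (equivalently the ``twist by a line bundle'' coaction on $H^*(BU_n)$), by $d_3(c_i)=\pm(n+1-i)\,c_{i-1}\iota_3$, so in particular $d_3(c_1)=\pm n\iota_3$; and the values of $H^s(BPU_n)$ for $s\le 2p+2$. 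Since $H^*(BU_n)$ is concentrated in even degrees and the only nonvanishing columns are $s\in\{0,3,2p+2,2p+5\}$, the $E_2$-page has exactly one group in total degree $2p+3$, namely $E_2^{3,2p}\cong\iota_3\,H^{2p}(BU_n)\lp$, and exactly two in total degree $2p+4$, namely the free module $E_2^{0,2p+4}=H^{2p+4}(BU_n)\lp$ and $E_2^{2p+2,2}\cong\Z/p$, the latter spanned by $\iota_{2p+2}c_1$.

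Next I would compute $d_3$ on these bidegrees. A direct calculation with the derivation $d_3$ shows that $\iota_3\mu\in\opn{im}(d_3)\lp$ for every degree-$2p$ Chern monomial $\mu\ne c_p$: one reduces $\iota_3\mu$ using $d_3$ of a suitable degree-$(2p+2)$ monomial, exploiting that $n+1-i\equiv 1-i$ is a unit mod $p$ whenever $p\nmid i$. On the other hand $\iota_3 c_p\notin\opn{im}(d_3)\lp$ although $p\,\iota_3 c_p\in\opn{im}(d_3)\lp$, since the only monomials $M$ with $\iota_3 c_p$ appearing in $d_3(M)$ are $c_1c_p$ and (when $n>p$) $c_{p+1}$, and these contribute the coefficients $n$ and $n-p$, both divisible by $p$. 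Hence $E_4^{3,2p}\cong\Z/p$, generated by $\iota_3 c_p$. (The analogous calculation one bidegree lower yields $E_4^{3,2p-2}=0$, consistent with the known vanishing of $_pH^{2p+1}(BPU_n)$.) For the column $s=2p+2$: the only differential that can leave $E_3^{2p+2,2}$ is $d_3(\iota_{2p+2}c_1)=\pm\iota_{2p+2}\,d_3(c_1)=\pm n\,\iota_3\iota_{2p+2}\in E_3^{2p+5,0}\cong\Z/p$, which vanishes because $p\mid n$, and nothing can enter this bidegree before $d_{2p-1}$; so $E_4^{2p+2,2}=\dots=E_{2p-1}^{2p+2,2}\cong\Z/p$.

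Now both $E_{2p-1}^{3,2p}$ and $E_{2p-1}^{2p+2,2}$ are copies of $\Z/p$, and the only differential into or out of either one, in the relevant range, is $d_{2p-1}\colon E_{2p-1}^{3,2p}\to E_{2p-1}^{2p+2,2}$. The key step is to show that $d_{2p-1}(\iota_3 c_p)$ is a generator of $E^{2p+2,2}$, i.e.\ that $d_{2p-1}$ is an isomorphism. Granting this, $E_\infty^{3,2p}=\ker d_{2p-1}=0$, so $_pH^{2p+3}(BPU_n)=H^{2p+3}(BPU_n)\lp=0$; and $E_\infty^{2p+2,2}=\opn{coker}d_{2p-1}=0$, so $H^{2p+4}(BPU_n)\lp=E_\infty^{0,2p+4}$, which, being a $\Z\lp$-submodule of the free module $H^{2p+4}(BU_n)\lp$, is torsion-free, so $_pH^{2p+4}(BPU_n)=0$. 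This completes the argument.

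The step I expect to be the genuine obstacle is the identification of $d_{2p-1}$ used above. It is a secondary transgression, and the natural tool is the Kudo transgression theorem: in the mod $p$ spectral sequence the transgression commutes with $\opn{P}^1$, and $\iota_{2p+2}$ is built from $\iota_3$ by applying $\opn{P}^1$ and a Bockstein, the transgression of $\iota_3$ being $d_3(c_1)=\pm n\iota_3$. The subtlety is that, precisely because $p\mid n$, the class $c_1$ is a permanent cycle in the \emph{mod $p$} spectral sequence (it detects the generator of $H^2(BPU_n;\Z/p)$), so Kudo's theorem cannot be applied to it directly; instead one must work with the integral differential $d_3(c_1)=\pm n\iota_3$ and its precise divisibility by $p^r$, comparing the integral and mod $p$ spectral sequences, to extract the secondary operation responsible for $d_{2p-1}$ and to check that its coefficient is a unit. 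Everything else — the enumeration of columns and the cokernel computation for $d_3$ — is elementary bookkeeping.
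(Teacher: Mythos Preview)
Your overall architecture matches the paper exactly: the same localized Serre spectral sequence, the same enumeration of nonzero columns $s\in\{0,3,2p+2,2p+5\}$, and the same reduction of \eqref{eq:remain_to_show} to the two statements (i) $E_4^{3,2p}\cong\Z/p$ generated by $c_p x_1$ (the cokernel-of-$d_3$ computation), and (ii) $d_{2p-1}$ maps this generator nontrivially to $E_{2p-1}^{2p+2,2}\cong\Z/p$. Your argument for (i) is correct and is essentially the content of the paper's Lemmas \ref{lem:delta0_tau}--\ref{lem:exact_at_M1}, repackaged.

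The gap is (ii), and you have correctly located it. Your proposed attack via Kudo's transgression theorem runs into exactly the obstruction you describe: mod $p$, $c_1$ is a permanent cycle, so there is no mod $p$ transgression to which one can apply $\opn{P}^1$, and the ``compare integral and mod $p$ spectral sequences to extract a secondary operation'' step is not a proof but a hope. The paper does not attempt anything of this kind. Instead it brings in the auxiliary fibrations $T:BT^n\to BPT^n\to K(\Z,3)$ and $K:K(\Z,2)\to *\to K(\Z,3)$ and the comparison maps $\Psi^*:{^UE}\to{^TE}$ and $\Phi^*:{^KE}\to{^TE}$ of \eqref{eq:3_by_3_diag}. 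In $^KE$ the higher differential $d_{2p-1}(v^{p-1}x_1)=y_{p,0}$ is known explicitly (Proposition~\ref{prop:diff0}); pushing this through $\Phi^*$ gives $^Td_{2p-1}(v_n^{p-1}x_1)=y_{p,0}$ (Lemma~\ref{lem:Td_2p-1}). One then writes $\Psi^*(c_p)=\sigma_p(v_1,\dots,v_n)$, substitutes $v_i=(v_i-v_n)+v_n$ with the $v_i-v_n$ permanent cycles in $^TE$, and reads off
\[
\Psi^*\,{^Ud}_{2p-1}(c_p x_1)=\binom{n-1}{p-1}\,c_1 y_{p,0},
\]
which is nonzero because $\binom{n-1}{p-1}\not\equiv 0\pmod p$ (Lemma~\ref{lem:image_of_delta1}). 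Since $\Psi^*$ is injective on $E_2^{2p+2,2}$, this gives $d_{2p-1}(c_p x_1)\ne 0$. That is the missing ingredient; once you have it, the rest of your proof goes through verbatim.
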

\begin{rem}
    Note $_pH^s(BPU_n)\cong {_pH}^s(BPU_n)\lp$. By the discussion preceding Remark \ref{rem:Vezzosi}, Theorem \ref{thm: main thm, 2p+3 and 2p+4 have trivial p torsion} completely determines $H^s(BPU_n;\Z\lp)$ for $0 \leq s< 2p + 5$. 
\end{rem}

For $s\leq 3$, the groups $_pH^s(BPU_n)$ are well known and are part of Theorem 1.1 of \cite{gu2019cohomology}. For $3 < s < 2p+2$, they are given in Theorem 1.2 of \cite{gu2019cohomology}. Therefore, what remains to show is 
\begin{equation}\label{eq:remain_to_show}
    _pH^{2p+2}(BPU_n) \cong \Z/p,\ _pH^{2p+3}(BPU_n) =  {_pH}^{2p+4}(BPU_n) =0.
\end{equation}

\begin{rem}
    For $p = 2$,  it was shown by the first named author \cite{gu2019cohomology} that the $2$-torsion subgroup of $H^{s}(BPU_n)$ in dimension $s = 2p+3 = 7$ is $\Z/2$ if $n \equiv 2$ mod $4$, and is $0$ otherwise. In particular, Theorem \ref{thm: main thm, 2p+3 and 2p+4 have trivial p torsion} does not generalize to the case $p=2$.
\end{rem}

\begin{rem}
    For $p = 3$, \eqref{eq:remain_to_show} follows immediately from the computation in \cite{gu2019cohomology} of $H^s(BPU_n)$ in dimensions $8, 9$ and $10$. 
\end{rem}

\subsection*{Organization of the paper} In Section \ref{sec:spectral_sequence}, we discuss some preliminary results of the Serre spectral sequence $^UE$ associated to the fiber sequence
$U: ~ BU_n \to BPU_n \to K(\mathbb{Z}, 3)$.  This will be our main tool for computing the $p$-primary subgroup $_pH^s(BPU_n)$.  We will also show that \eqref{eq:remain_to_show} can be deduced from Theorem 1.2 of \cite{gu2019cohomology} and Proposition \ref{prop: four term exact sequence}, which says that certain chain complex $\mathcal{M}$ constructed from the differentials in $^UE$ is exact.

In Section \ref{sec:exactness}, we prove Proposition \ref{prop: four term exact sequence}.  The proof is based on the explicit computation of some relevant differentials in $^UE$.  This section finishes our proof of Theorem \ref{thm: main thm, 2p+3 and 2p+4 have trivial p torsion}.

\subsection*{Acknowledgments}
The authors would like to thank Ben Williams for various helpful editorial suggestions. The first named author would like to thank the hospitality of Professor Jie Wu, as well as the Center for Topology and Geometry based Technology (CTGT) and the School of Mathematical Sciences at Hebei Normal University, and acknowledge the supports from the National Natural Science Foundation of China (No. 21113062) and from the High-level Scientific Research Foundation of Hebei Province (No. 13113093). The second named author and the third named author would like to thank Xiangjun Wang for helpful discussions.
The second named author and the third named author were supported by the National Natural Science Foundation of China 
(No. 11871284).  The fourth named author was supported by the National Natural Science Foundation of China (No. 12001474; 11761072).  All authors contribute equally.

\section{The spectral sequences}\label{sec:spectral_sequence}

\subsection*{The Serre spectral sequence $^UE$}

We follow the strategy employed in \cite{gu2019cohomology} to compute the cohomology of $BPU_n$. The short exact sequence of Lie groups
$$1 \to S^1 \to U_n \to PU_n \to 1$$
induces a fiber sequence of their classifying spaces
$$BS^1 \to BU_n \to BPU_n.$$
Notice that $BS^1$ is of the homotopy type of the Eilenberg-Mac Lane space $K(\Z, 2)$ and indeed we obtain another fiber sequence
\begin{equation}\label{eq:chi}
U: ~ BU_n \to BPU_n \xrightarrow{\chi} K(\Z, 3).
\end{equation}
\begin{rem}
    In general, it is not always possible to obtain a fiber sequence of the form $F\to E\to B$ from a fiber sequence $\Omega B\to F\to E$. See Ganea \cite{ganea1967induced} for more.
\end{rem}

We will use the Serre spectral sequence associated to the last fiber sequence to compute the cohomology of $BPU_n$. 
For notational convenience, we denote this spectral sequence by $^UE$. The $E_2$ page of 
$^UE$ has the form 
$$^U E^{s, t}_{2} = H^{s}(K(\mathbb{Z},3);H^{t}(BU_{n})) \Longrightarrow H^{s+t}(BPU_{n}).$$

In principle, Cartan and Serre \cite{cartan19551955} determined the cohomology of $K(A,n)$ for all finitely generated abelian groups $A$.  Also see Tamanoi \cite{tamanoi1999subalgebras} for a nice treatment.

We summarize the $p$-local cohomology of $K(\Z, 3)$ in low dimensions as follows.

\begin{prop}\label{prop: p local cohomology of KZ3 below 2p+5}
Let $p > 2$ be a prime. In degrees up to $2p+5$, we have 
\begin{equation}\label{equation: p local cohomology of KZ3 below 2p+5}
H^{s}(K(\mathbb{Z},3))_{(p)} = 
\begin{cases}
\Zp, & s = 0,\ 3,\\
\Z/p, &  s = 2p+2,\ 2p+5,\\
0, & s < 2p+5,  s \neq 0,\ 3,\ 2p+2.
\end{cases}
\end{equation}
where $x_1,~ y_{p, 0},~ x_1 y_{p, 0}$ are generators on degree $3, 2p+2, 2p+5$ respectively. In addition, we have $y_{p, 0} = \delta\opn{P}^1(\bar{x}_1)$.
\end{prop}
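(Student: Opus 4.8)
The plan is to compute $H^*(K(\mathbb{Z},3);\Fp)$ in the relevant range from the theorem of Cartan--Serre \cite{cartan19551955} (in the form of Tamanoi \cite{tamanoi1999subalgebras}), and then to pass to integral coefficients using the mod $p$ Bockstein spectral sequence. Recall that $H^*(K(\mathbb{Z},3);\Fp)$ is the free graded-commutative $\Fp$-algebra on the classes $\opn{St}^I\bar{x}_1$, where $\bar{x}_1\in H^3(K(\mathbb{Z},3);\Fp)$ is the mod $p$ reduction of the fundamental class and $I$ runs over the admissible monomials in the $\opn{P}^i$ and the mod $p$ Bockstein $\beta$ of excess less than $3$ that do not end in a Bockstein. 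Since $\deg\opn{P}^i = 2i(p-1)$ and $\opn{P}^2\bar{x}_1 = 0$ by instability, the excess and admissibility constraints leave, in degrees at most $2p+5$, only the generators $\bar{x}_1$, $\opn{P}^1\bar{x}_1$ and $\beta\opn{P}^1\bar{x}_1$; using in addition $\bar{x}_1^2 = 0$ (as $p$ is odd) and that every other monomial in these generators has degree exceeding $2p+5$, we get that in degrees $\leq 2p+5$ the $\Fp$-space $H^*(K(\mathbb{Z},3);\Fp)$ has basis $1$ (degree $0$), $\bar{x}_1$ (degree $3$), $\opn{P}^1\bar{x}_1$ (degree $2p+1$), $\beta\opn{P}^1\bar{x}_1$ (degree $2p+2$), $\bar{x}_1\opn{P}^1\bar{x}_1$ (degree $2p+4$), and $\bar{x}_1\beta\opn{P}^1\bar{x}_1$ (degree $2p+5$).

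Next I would run the Bockstein spectral sequence, with $E_1$-page $H^*(K(\mathbb{Z},3);\Fp)$, first differential $d_1 = \beta$, and $E_\infty$-page $\bigl(H^*(K(\mathbb{Z},3))/\mathrm{torsion}\bigr)\otimes\Fp$. Since $\bar{x}_1$ is the reduction of the integral class $x_1$, we have $\beta\bar{x}_1 = 0$, hence by the Leibniz rule $\beta(\opn{P}^1\bar{x}_1) = \beta\opn{P}^1\bar{x}_1$ and $\beta(\bar{x}_1\opn{P}^1\bar{x}_1) = \bar{x}_1\beta\opn{P}^1\bar{x}_1$, both nonzero because these monomials are distinct members of a basis of a free graded-commutative algebra. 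Thus on the displayed basis $d_1$ pairs $\opn{P}^1\bar{x}_1$ with $\beta\opn{P}^1\bar{x}_1$ and $\bar{x}_1\opn{P}^1\bar{x}_1$ with $\bar{x}_1\beta\opn{P}^1\bar{x}_1$, so that $E_2 = E_\infty$ equals $\Fp$ in degrees $0$ and $3$ and vanishes in all other degrees $\leq 2p+5$. In particular, the torsion-free part of $H^s(K(\mathbb{Z},3))_{(p)}$ is $\Zp$ for $s = 0,3$ and $0$ for $0 < s\leq 2p+5$, $s\neq 3$.

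It remains to read off the torsion. In the Bockstein spectral sequence a basis element of $H^{s-1}(-;\Fp)$ supporting a nonzero $d_1$ contributes a summand $\mathbb{Z}/p$ to ${}_pH^{s}(-)$, and since no class survives to $E_2$ in our range except in degrees $0$ and $3$, there is no higher $p$-torsion. Hence the pair $(\opn{P}^1\bar{x}_1,\beta\opn{P}^1\bar{x}_1)$ yields ${}_pH^{2p+2}(K(\mathbb{Z},3))\cong\mathbb{Z}/p$, generated by $y_{p,0}:=\delta\opn{P}^1(\bar{x}_1)$, and the pair $(\bar{x}_1\opn{P}^1\bar{x}_1,\bar{x}_1\beta\opn{P}^1\bar{x}_1)$ yields ${}_pH^{2p+5}(K(\mathbb{Z},3))\cong\mathbb{Z}/p$, generated by $\delta(\bar{x}_1\opn{P}^1\bar{x}_1) = \pm x_1 y_{p,0}$ (using $\delta\bar{x}_1 = 0$ and the derivation property of $\delta$); all other $p$-torsion in the range is $0$. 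Combined with the previous paragraph, this is exactly the asserted conclusion, with generators $x_1$, $y_{p,0}$ and $x_1 y_{p,0}$ in degrees $3$, $2p+2$ and $2p+5$.

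The main obstacle is the bookkeeping in the first step: one must verify carefully that the excess and admissibility conditions, together with the instability relation $\opn{P}^2\bar{x}_1 = 0$, really do exclude every Cartan--Serre generator other than $\bar{x}_1$, $\opn{P}^1\bar{x}_1$ and $\beta\opn{P}^1\bar{x}_1$ in degrees below $2p+6$, and that no product of these except the five listed lands in that range. Everything after that is a routine application of the Bockstein spectral sequence. One could instead avoid Cartan--Serre by using the Serre spectral sequence of the path-loop fibration $\mathbb{C}P^\infty \to P \to K(\mathbb{Z},3)$, where the transgression $d_3(u) = x_1$ together with the contractibility of $P$ forces, by descending induction on degree, the torsion $\mathbb{Z}/p$ in degree $2p+2$ coming from $d_{2p-1}(x_1 u^{p-1})$; but the Bockstein argument is cleaner.
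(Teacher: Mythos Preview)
The paper does not actually prove this proposition: it is stated as a summary of the classical computation of Cartan--Serre \cite{cartan19551955}, with a pointer to Tamanoi \cite{tamanoi1999subalgebras}, and no argument is given. Your write-up therefore supplies what the paper simply quotes.

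Your argument is correct. The enumeration of Cartan--Serre generators in degrees $\leq 2p+5$ is complete: the only admissible $I$ (not ending in $\beta$) with $\opn{St}^I\bar{x}_1\neq 0$ and $|\opn{St}^I\bar{x}_1|\leq 2p+5$ are $I=\varnothing$, $I=\opn{P}^1$, and $I=\beta\opn{P}^1$, since $\opn{P}^2\bar{x}_1=0$ by instability and every longer admissible word (e.g.\ $\opn{P}^p\opn{P}^1$) lands in degree $\geq 2p^2+1$. The product check is also fine because $(\opn{P}^1\bar{x}_1)^2$ already has degree $4p+2>2p+5$ for $p\geq 3$. The Bockstein spectral sequence step is routine and your reading of the $d_1$-pairs as $\Z/p$ summands in the indicated degrees is correct.

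One small wording issue: the integral Bockstein $\delta$ is not a derivation on $H^*(-;\Z/p)$ in the literal sense. What you are using is the identity $\delta(\bar a\cdot b)=(-1)^{|a|}a\cdot\delta(b)$ valid when $\bar a$ is the mod~$p$ reduction of an integral class $a$ (so $\delta\bar a=0$); this is what gives $\delta(\bar{x}_1\cdot\opn{P}^1\bar{x}_1)=-x_1\cdot y_{p,0}$. With that clarification the identification of the degree-$(2p+5)$ generator with $x_1y_{p,0}$ is justified. The alternative route you sketch via the path--loop fibration is exactly the spectral sequence ${^K}E$ that the paper records in Proposition~\ref{prop:diff0}, so either approach is consistent with the paper's framework.
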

Here we use the same notations for the generators as in \cite{gu2019cohomology}. Sometimes we abuse notations and let $x_1, y_{p, 0}$ denote $\chi^*(x_1)$, $\chi^*(y_{p,0})$, where $\chi: BPU_n\to K(\Z,3)$ is defined in \eqref{eq:chi}. For instance, we have

\begin{prop}[Theorem 1.2, \cite{gu2019cohomology}]\label{prop:thm1_2}
    Let $p$ be a prime. In $H^{2p+2}(BPU_{n})$, we have $y_{p,0}\neq 0$ of order $p$ when $p\mid n$, and $y_{p,0}=0$ otherwise. Furthermore, the $p$-torsion subgroup of $H^k(BPU_n)$ is $0$ for $3<k<2p+2$.
\end{prop}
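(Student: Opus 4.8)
The plan is to read off $H^*(BPU_n)\lp$ in the relevant range from the Serre spectral sequence $^UE$ of $U\colon BU_n\to BPU_n\xrightarrow{\chi}K(\Z,3)$. The case $p\nmid n$ is immediate from \eqref{equation: cohomology of BSUn}: there $H^*(BPU_n)\lp\cong\Zp[c_2,\dots,c_n]$ is torsion-free, so $_pH^k(BPU_n)=0$ for every $k$, and $y_{p,0}=\chi^*\delta\opn{P}^1(\bar x_1)$ is a torsion class (in fact $\bar x_1=0$, as $H^3(BPU_n)\lp=0$), hence vanishes. So assume $p\mid n$, whence $n\ge p$.

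First I would pin down the shape of $^UE$ in total degrees $\le 2p+2$. Since $H^t(BU_n)$ is free abelian, $^UE^{s,t}_2=H^s(K(\Z,3))\otimes H^t(BU_n)$, and by Proposition \ref{prop: p local cohomology of KZ3 below 2p+5} only the columns $s=0,3,2p+2$ contribute $p$-locally in this range: $^UE^{0,t}_2=H^t(BU_n)\lp$, $^UE^{3,t}_2=x_1\cdot H^t(BU_n)\lp$, and $^UE^{2p+2,0}_2=\Z/p$ generated by $y_{p,0}$; moreover $d_2=0$, so $E_3=E_2$ here. Because only three columns are present, for $3<k<2p+2$ the associated graded of $H^k(BPU_n)\lp$ is $E_\infty^{0,k}\oplus E_\infty^{3,k-3}$, the first summand being a $\Zp$-submodule of the free module $H^k(BU_n)\lp$ and hence free; so the $p$-torsion of $H^k(BPU_n)\lp$ equals the torsion of $E_\infty^{3,k-3}$, and the filtration extension splits. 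The only differential hitting $^UE^{3,k-3}$ is $d_3\colon {}^UE^{0,k-1}_3\to {}^UE^{3,k-3}_3$, so $E_\infty^{3,k-3}$ is a quotient of $\opn{coker}(d_3)$; and the only differential that can hit $^UE^{2p+2,0}$ is $d_{2p-1}\colon {}^UE^{3,2p-2}_{2p-1}\to {}^UE^{2p+2,0}_{2p-1}$, as the other candidate $d_{2p+2}$ has source $H^{2p+1}(BU_n)=0$.

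The heart of the proof is the transgression on Chern classes, for which I would establish
\[
d_3(c_i)=(n+1-i)\,x_1 c_{i-1},\qquad 1\le i\le n\ \ (c_0:=1).
\]
The case $i=1$ follows from $H^3(BPU_n)\lp\cong\Z/p^r$ (Theorem 1.1 of \cite{gu2019cohomology}): by the three-column analysis $H^3(BPU_n)\lp=E_\infty^{3,0}=\Zp\langle x_1\rangle\big/\opn{im}\bigl(d_3\colon\Zp\langle c_1\rangle\to\Zp\langle x_1\rangle\bigr)$, which forces $d_3(c_1)=nx_1$ up to a $\Zp$-unit; the general formula comes from the multiplicativity of $^UE$ together with a restriction-to-the-maximal-torus (splitting principle) computation, as carried out in \cite{gu2019cohomology}. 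Granting the formula, the Leibniz rule gives, for $\lambda_1\ge\dots\ge\lambda_l\ge1$,
\[
d_3\bigl(c_{\lambda_1+1}c_{\lambda_2}\cdots c_{\lambda_l}\bigr)=(n-\lambda_1)\,x_1 c_{\lambda_1}c_{\lambda_2}\cdots c_{\lambda_l}+\bigl(\text{a combination of }x_1 c_\mu\text{ with }\mu_1>\lambda_1\bigr).
\]
If $c_{\lambda_1}\cdots c_{\lambda_l}\in H^{2(j-1)}(BU_n)$ with $2\le j\le p$, then $1\le\lambda_a\le p-1$ and $\lambda_1+1\le p\le n$ for every $a$, so all the coefficients $n-\lambda_a\equiv-\lambda_a\not\equiv 0\pmod p$ are $\Zp$-units; an induction on $\lambda_1$ (largest first) then shows $d_3\colon {}^UE^{0,2j}_3\to {}^UE^{3,2j-2}_3$ is surjective for $2\le j\le p$. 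Hence $E_\infty^{3,k-3}=0$ for $3<k<2p+2$, giving $_pH^k(BPU_n)=0$ there; and the case $j=p$ gives $E_4^{3,2p-2}=0$, so the differential $d_{2p-1}$ above vanishes and $E_\infty^{2p+2,0}={}^UE^{2p+2,0}_2=\Z/p$. It follows that the edge homomorphism $H^{2p+2}(K(\Z,3))\lp\to H^{2p+2}(BPU_n)\lp$ is injective, so $y_{p,0}=\delta\opn{P}^1(\bar x_1)$ has order exactly $p$ and generates $_pH^{2p+2}(BPU_n)$ (the other summand $E_\infty^{0,2p+2}$ being free).

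The main obstacle is the transgression formula itself — above all, ruling out extra terms of $d_3(c_i)$ inside $x_1\cdot H^{2i-2}(BU_n)\lp$; in truth all one needs is that the coefficient of $x_1 c_{i-1}$ in $d_3(c_i)$ is a $p$-local unit for $2\le i\le p$, and it is precisely the failure of this at $i=1$ (the coefficient being $n\equiv 0$) that makes degree $3$ exceptional. Everything else is bookkeeping in a three-column spectral sequence, together with the observations that the relevant filtration extensions split (their top quotients being free $\Zp$-modules) and that naturality of $\chi^*$, $\delta$, and $\opn{P}^1$ identifies the surviving degree-$(2p+2)$ class with $\delta\opn{P}^1(\bar x_1)$.
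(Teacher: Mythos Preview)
This proposition is not proved in the present paper; it is quoted verbatim as Theorem~1.2 of \cite{gu2019cohomology}. Your argument is a correct reconstruction of that result using exactly the machinery the paper imports from the same source: the Serre spectral sequence $^UE$, the column pattern of Proposition~\ref{prop: p local cohomology of KZ3 below 2p+5}, and the differential formula $d_3(c_k)=(n-k+1)c_{k-1}x_1$ of Corollary~\ref{cor:d3}. Your ``leading-term'' induction on $\lambda_1$ showing that $d_3\colon {^UE}^{0,2j}_3\to{^UE}^{3,2j-2}_3$ is surjective for $2\le j\le p$ is precisely the upper-triangular matrix argument the paper runs in Lemma~\ref{lem:delta0_tau} for the next degree up, so your approach matches both the paper's toolkit and, presumably, the original proof in \cite{gu2019cohomology}.

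One small imprecision worth cleaning up: when $p\nmid n$ you write ``$\bar x_1=0$, as $H^3(BPU_n)\lp=0$,'' but $\bar x_1$ lives in $H^3(BPU_n;\Z/p)$, not in $H^3(BPU_n)\lp$. The conclusion is still correct (either argue via the universal coefficient theorem, or simply note that $y_{p,0}$ is $p$-torsion while $H^*(BPU_n)\lp$ is free), but the justification as written conflates two different groups.
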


Also recall
\begin{equation}\label{equation: cohomology of BUn}
    H^{*}(BU_{n}) = \mathbb{Z}[c_1,c_2,\dots,c_n],\ |c_i|=2i.
\end{equation}
In particular, $H^{*}(BU_{n})$ is torsion-free.  We have 
$$^U E^{s, t}_{2} \cong H^{s}(K(\mathbb{Z},3)) \otimes H^{t}(BU_{n}).$$

\subsection*{The auxiliary fiber sequences and spectral sequences}

To determine some of the differentials in $^UE$, we consider two more fiber sequences. 

Let $T^n$ be the maximal torus of $U^n$ with the inclusion denoted by
\[\psi: T^n\to U_n.\]
Passing to quotients over $S^1$, we have another inclusion of maximal torus
\[\psi': PT^n\to PU_n.\]
The quotient map $T^n\to PT^n$ fits in an exact sequnce of Lie groups 
\[1\to S^1\to T^n\to PT^n\to 1,\]
which induces a fiber sequence
$$T: ~ BT^n \to BPT^n \to K(\mathbb{Z}, 3).$$
Notice that we have
\begin{equation}\label{equation: cohomology of BTn}
    H^{*}(BT^{n}) = \mathbb{Z}[v_1,v_2,\dots,v_n],\ |v_i|=2.
\end{equation}

The next fiber sequence is simply the path fibration for the space $K(\Z,3)$
$$K: ~ K(\mathbb{Z}, 2) \to * \to K(\mathbb{Z}, 3)$$
where $*$ denotes a contractible space. We denote their associated Serre spectral sequences as $^T E$ and $^K E$ respectively.

We denote the corresponding differentials
of $^UE$, ${^TE}$, and ${^KE}$ by ${^Ud}_*^{*,*}$, ${^Td}_*^{*,*}$, and ${^Kd}_*^{*,*}$, respectively, if there are risks of ambiguity. Otherwise, we simply denote the differentials by $d_*^{*,*}$.

These fiber sequences fit into the following homotopy commutative diagram:
\begin{equation}\label{eq:3_by_3_diag}
    \begin{tikzcd}
        K\arrow[d,"\Phi"]:& K(\Z,2)\arrow[r]\arrow[d,"B\varphi"]& *\arrow[r]\arrow[d]& 
        K(\Z,3)\arrow[d,"="]\\
        T:\arrow[d,"\Psi"]& BT^n\arrow[r]\arrow[d,"B\psi"]& BPT^n\arrow[r]\arrow[d,"B\psi'"]& K(\Z,3)\arrow[d,"="]\\
        U:& BU_n\arrow[r]& BPU_n\arrow[r]& K(\Z,3)
    \end{tikzcd}
\end{equation}
Here, the map $B\varphi: K(\Z,2)\simeq BS^1\to BT^n$ is the de-looping of the diagonal map $S^1\to T^n$. The induced homomorphism between cohomology rings is as follows:
\[B\varphi^*:H^*(BT^n) = \Z[v_1,v_2,\cdots,v_n] \to H^*(BS^1) = \Z[v],\ v_i\mapsto v.\]
The map $B\psi: BT^n\to BU_n$ induces the injective ring homomorphism
\begin{equation*}
    \begin{split}
        B\psi^*: H^*(BU_n) = \Z[c_1,\cdots,c_n] &\to H^*(BT^n) = \Z[v_1,\cdots,v_n],\\
        c_i &\mapsto \sigma_i(v_1,\cdots,v_n),
    \end{split}
\end{equation*}
where $\sigma_j(t_1,t_2,\cdots,t_n)$ be the $j$th elementary symmetric polynomial in variables $t_1,t_2,\cdots,t_n$:
\begin{equation}\label{eq:sigma_def}
    \begin{split}
        & \sigma_0(t_1,t_2,\cdots,t_n) = 1,\\
        & \sigma_1(t_1,t_2,\cdots,t_n) = t_1+t_2+\cdots+t_n,\\
        & \sigma_2(t_1,t_2,\cdots,t_n) = \sum_{i<j}t_it_j,\\
        & \vdots\\
        & \sigma_p(t_1,t_2,\cdots,t_n) = t_1t_2\cdots t_n.
    \end{split}
\end{equation}

We will use the associated maps of spectral sequences to compute the differentials in $^UE$.  This is possible because we have a good understanding of the corresonding differentials in $^TE$ and $^KE$.
In particular, we have the following results.

\begin{prop}[\cite{gu2019cohomology}, Corollary 2.16]\label{prop:diff0}
 The higher differentials of ${^KE}_{*}^{*,*}$ satisfy
 \begin{equation*}
 \begin{split}
   &d_{3}(v)=x_1,\\
   &d_{2p-1}(x_{1}v^{lp^{e}-1})=v^{lp^{e}-1-(p-1)}y_{p,0},\quad e > 0,\ \operatorname{gcd}(l,p)=1,\\
   &d_{r}(x_1)=d_{r}(y_{p,0})=0,\quad \textrm{for all }r,
 \end{split}
\end{equation*}
and the Leibniz rule. 
\end{prop}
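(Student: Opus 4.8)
The governing principle is the acyclicity of the total space of $K$: since a point is contractible, ${^KE}_\infty^{s,t}=0$ unless $(s,t)=(0,0)$, and this, together with the ring structure of the spectral sequence and a comparison with the mod $p$ Serre spectral sequence of $K$, will force all the differentials. I would first pin down $d_3$. As $*\to K(\Z,3)$ is the path fibration, the transgression $\tau\colon H^2(K(\Z,2))_{(p)}\to H^3(K(\Z,3))_{(p)}$ is an isomorphism of free $\Zp$-modules of rank $1$, whence $d_3(v)=x_1$ (up to the sign convention fixing $x_1$). The Leibniz rule then gives $d_3(v^k)=kv^{k-1}x_1$, and since $|x_1|=3$ is odd and $p>2$ we have $x_1^2=0$ $p$-locally, so $d_3(v^kx_1)=0$, $d_3(v^ky_{p,0})=kv^{k-1}x_1y_{p,0}$, and $d_3(v^kx_1y_{p,0})=0$.

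Together with Proposition \ref{prop: p local cohomology of KZ3 below 2p+5}, these $d_3$-values determine ${^KE}_4$ in low total degrees: besides ${^KE}_4^{0,0}=\Zp$, it is supported on the rows $s=3$ and $s=2p+2$, with ${^KE}_4^{3,\,2(lp^e-1)}\cong\Z/p^e$ generated by $x_1v^{lp^e-1}$ (for $e\ge1$, $\gcd(l,p)=1$) and ${^KE}_4^{2p+2,\,2j}\cong\Z/p$ generated by $v^jy_{p,0}$ (for $p\mid j$), while $x_1$ and $x_1y_{p,0}$ are already boundaries (hit by $v$ and $vy_{p,0}$). The vanishing $d_r(x_1)=d_r(y_{p,0})=0$ is automatic, both classes lying in fibre degree $0$ so that their $d_r$ would land in negative fibre degree. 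Finally, since $H^*(K(\Z,3))_{(p)}$ vanishes in the relevant intermediate degrees, a short bidegree check shows that no differential $d_4,\dots,d_{2p-2}$ meets either row, so $x_1v^{lp^e-1}$ and $v^jy_{p,0}$ survive to ${^KE}_{2p-1}$ with unchanged orders.

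The higher differential is then forced by a bidegree count: the only differential that can hit ${^KE}^{2p+2,\,2j}$ is $d_{2p-1}$ out of ${^KE}^{3,\,2(j+p-1)}$. Taking $j=lp^e-p$, acyclicity forces $v^jy_{p,0}$ to be killed, so $d_{2p-1}(x_1v^{lp^e-1})=c_{l,e}\,v^{lp^e-p}y_{p,0}$ with $c_{l,e}$ a unit modulo $p$; in particular, for $l=e=1$ source and target are both $\Z/p$, so $d_{2p-1}(x_1v^{p-1})=c\,y_{p,0}$ with $c$ a unit. To see $c=1$ for the specific generator $y_{p,0}=\delta\opn{P}^1(\bar{x}_1)$, I would pass to the mod $p$ Serre spectral sequence of $K$: there $\bar{v}$ transgresses to $\bar{x}_1$, and since $\opn{P}^1$ commutes with transgression and $\opn{P}^1\bar{v}=\bar{v}^p$ (because $|\bar{v}|=2$), Kudo's transgression theorem gives that $\bar{v}^p$ transgresses to $\opn{P}^1\bar{x}_1$ via $d_{2p+1}$; tracing this through the Bockstein $\delta$, together with $\delta\opn{P}^1(\bar{x}_1)=y_{p,0}$, yields $c=1$.

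For general $(l,e)$ I would stay in the mod $p$ spectral sequence. Since $p\mid lp^e-p$ we have $d_3(\bar{v}^{lp^e-p})=0$, and since $H^*(K(\Z,3);\Fp)$ vanishes in the relevant intermediate degrees, both $\bar{v}^{lp^e-p}$ and $\bar{x}_1\bar{v}^{p-1}$ survive to ${^KE}_{2p-1}$ over $\Fp$, where their product is $\bar{x}_1\bar{v}^{lp^e-1}$. As $d_{2p-1}(\bar{v}^{lp^e-p})=0$ (its target lies in base degree $2p-1$, which carries no cohomology), the Leibniz rule gives $d_{2p-1}(\bar{x}_1\bar{v}^{lp^e-1})=\bar{v}^{lp^e-p}\bar{y}_{p,0}$; since mod $p$ reduction takes the generator $v^{lp^e-p}y_{p,0}$ of the relevant $\Z/p$ to the nonzero class $\bar{v}^{lp^e-p}\bar{y}_{p,0}$, the integral identity $d_{2p-1}(x_1v^{lp^e-1})=v^{lp^e-p}y_{p,0}$ follows, and the Leibniz rule propagates all the differentials to arbitrary monomials. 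I expect the main obstacle to be exactly the step pinning the coefficient of the $d_{2p-1}$-differential to the named class $y_{p,0}=\delta\opn{P}^1(\bar{x}_1)$: organizing the passage between the integral and mod $p$ spectral sequences and the Bockstein carefully enough that Kudo's transgression theorem produces $1$ and not merely a unit.
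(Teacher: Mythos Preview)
The paper does not prove this proposition: it is quoted as Corollary~2.16 of \cite{gu2019cohomology} and used as a black box. So there is no ``paper's own proof'' to compare with, and your write-up is an independent derivation.

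Your overall strategy is the standard one and is essentially correct: acyclicity of the path fibration forces $d_3(v)=x_1$, the Leibniz rule determines the $E_4$-page as you describe, a bidegree check rules out intermediate differentials, and acyclicity then forces $d_{2p-1}(x_1v^{lp^e-1})$ to hit the generator of ${^KE}_{2p-1}^{2p+2,\,2(lp^e-p)}\cong\Z/p$ up to a unit. Your propagation to general $(l,e)$ via the Leibniz rule in the mod $p$ spectral sequence is also fine.

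The one soft spot is the step where you pin the unit to $1$. You invoke the part of Kudo's theorem saying that $\bar v^{\,p}=\opn{P}^1\bar v$ transgresses to $\opn{P}^1\bar x_1$ via $d_{2p+1}$, and then write ``tracing this through the Bockstein $\delta$ \ldots\ yields $c=1$.'' But that statement concerns $d_{2p+1}$ on $\bar v^{\,p}$, not $d_{2p-1}$ on $\bar x_1\bar v^{\,p-1}$; these live on different pages and in different bidegrees, and there is no direct Bockstein manipulation that converts one into the other. What you actually need is the \emph{second} clause of Kudo's transgression theorem: if $x\in H^{2m}(F;\Fp)$ transgresses to $y$, then $x^{p-1}y$ survives to $E_{2m(p-1)+1}$ and
\[
d_{2m(p-1)+1}\bigl(x^{p-1}y\bigr)\;=\;-\,\beta\opn{P}^{m}(y).
\]
With $x=\bar v$, $m=1$, $y=\bar x_1$ this gives $d_{2p-1}(\bar v^{\,p-1}\bar x_1)=-\beta\opn{P}^1\bar x_1=-\bar y_{p,0}$, and comparison with the integral spectral sequence then yields $c=\pm 1$, the sign depending on one's conventions for Kudo and for the generator $y_{p,0}$. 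Once you cite the correct half of Kudo, your argument goes through; as written, the phrase ``tracing this through the Bockstein'' is not a proof.
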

\begin{rem}
    Proposition \ref{prop:diff0} is a special case of Corollary 2.16, \cite{gu2019cohomology}. Here, we take the opportunity to correct a typo in the original Corollary 2.16, \cite{gu2019cohomology}, where the condition $k \geq e$ should be replaced by $e > k$. 
\end{rem}

\begin{prop}[\cite{gu2019cohomology}, Proposition 3.2]\label{pro:diff1}
    The differential  $^{T}d_{r}^{*,*}$, is partially determined as follows:
\begin{equation}
 ^{T}d_{r}^{*,2t}(v_{i}^{t}\xi)={(B\pi_i)^{*}}({^Kd}_{r}^{*,2t}(v^{t}\xi)),
\end{equation}
where $\xi\in {^{T}E}_{r}^{*,0}$, a quotient group of $H^*(K(\mathbb{Z}, 3))$, and $\pi_i: T^{n}\rightarrow S^1$ is the projection of the $i$th diagonal entry. In plain words, $^{T}d_{r}^{*,2t}(v_{i}^{t}\xi)$ is simply $^{K}d_{r}^{*,2t}(v^{t}\xi)$ with $v$ replaced by $v_i$.
\end{prop}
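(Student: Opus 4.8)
The plan is to exhibit, for each index $i$, a morphism of fiber sequences $T\to K$ whose effect on the cohomology of the fiber is exactly the substitution $v\mapsto v_i$, and then to extract the formula from naturality of the Serre differentials. The geometric input is that the diagonal inclusion $\varphi\colon S^1\hookrightarrow T^n$ — the normal subgroup appearing in $1\to S^1\to T^n\to PT^n\to 1$ — is split by the $i$-th coordinate projection $\pi_i\colon T^n\to S^1$, since $\pi_i\circ\varphi=\mathrm{id}_{S^1}$. Hence $(\mathrm{id}_{S^1},\pi_i,0)$ is a morphism of central extensions from $1\to S^1\to T^n\to PT^n\to 1$ to the split extension $1\to S^1\xrightarrow{=}S^1\to 1\to 1$. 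I would apply the classifying space functor and pass to the associated three-term fiber sequences; this produces a morphism of fiber sequences from $T\colon BT^n\to BPT^n\to K(\Z,3)$ to $BS^1\to\ast\to K(\Z,3)$, the latter being equivalent to the path fibration $K$, and running in the opposite direction to the morphism $\Phi$ of \eqref{eq:3_by_3_diag}. By construction its fiber component is $B\pi_i\colon BT^n\to BS^1=K(\Z,2)$, its total-space component is the constant map $BPT^n\to\ast$, and its base component is $\mathrm{id}_{K(\Z,3)}$.

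This morphism induces a morphism of Serre spectral sequences $\theta_i\colon{}^KE\to{}^TE$ which on $E_2$-pages is the identity on the base factor $H^*(K(\Z,3))$ and is $(B\pi_i)^*\colon\Z[v]\to\Z[v_1,\dots,v_n]$, $v\mapsto v_i$, on the fiber factor. In particular $\theta_i$ is surjective in all bidegrees $(\ast,0)$, so every $\xi\in{}^TE_r^{\ast,0}$ lifts to some $\tilde\xi\in{}^KE_r^{\ast,0}$, and then $\theta_i(v^t\tilde\xi)=v_i^t\xi$. Naturality of the differentials, ${}^Td_r\circ\theta_i=\theta_i\circ{}^Kd_r$, gives ${}^Td_r^{\ast,2t}(v_i^t\xi)=\theta_i\bigl({}^Kd_r^{\ast,2t}(v^t\tilde\xi)\bigr)$, and on the target $\theta_i$ acts by the identity on the $H^*(K(\Z,3))$-part and by $v\mapsto v_i$ on powers of $v$. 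This is exactly the claimed identity once $(B\pi_i)^*$ is read as this term-by-term substitution; the right-hand side itself is then computed from Proposition \ref{prop:diff0} and the Leibniz rule.

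The only point that needs care is building the morphism $T\to K$ with the correct fiber map. One could instead argue abstractly that such a morphism of fiber sequences exists because $\chi_T$ is nullhomotopic (the extension being split), but that alone would not determine the fiber component, which a priori is only pinned down to be some class $\sum_j a_j v_j$ with $\sum_j a_j=1$; constructing the morphism functorially from the morphism of central extensions is what forces it to equal $B\pi_i$. Everything after that is a formal consequence of the naturality of the Serre spectral sequence.
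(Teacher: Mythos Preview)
The paper does not prove this proposition; it is quoted verbatim from \cite{gu2019cohomology} (Proposition~3.2 there) and only a typographical correction is recorded in the remark that follows. So there is no in-paper proof to compare against.

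Your argument is correct and is the natural one. The essential point---that $\pi_i\circ\varphi=\mathrm{id}_{S^1}$ gives a morphism of central extensions with the identity on the kernel $S^1$, hence a morphism of fiber sequences $T\to K$ whose base component is $\mathrm{id}_{K(\Z,3)}$ and whose fiber component is $B\pi_i$---is exactly what drives the result, and is presumably how the cited reference proves it as well. Your care in noting that the mere nullhomotopy of $\chi_T$ would not pin down the fiber map is well placed: it is the functorial construction from the extension morphism that forces the fiber map to be $B\pi_i$ rather than some other class $\sum_j a_jv_j$ with $\sum_j a_j=1$. One small refinement: on $E_2^{\ast,0}$ the map $\theta_i$ is not merely surjective but an isomorphism (the base map is the identity), and it is this that guarantees surjectivity on all later pages $E_r^{\ast,0}$, so that the lift $\tilde\xi$ exists.
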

\begin{rem}
    Here we correct another typo in the original Proposition 3.2 in \cite{gu2019cohomology}, in which ``$~\xi\in {^{T}E}_{r}^{0,*}$~'' should be replaced by ``~$\xi\in {^{T}E}_{r}^{*,0}$~''.
\end{rem}


\begin{prop}[\cite{gu2019cohomology}, Proposition 3.3]\label{prop:the_spectral_seq_TE}
    \begin{enumerate}
        \item The differential $^{T}d_{3}^{0,t}$ is given by the ``formal divergence''
        \[\nabla=\sum_{i=1}^{n}(\partial/\partial v_i): H^{t}(BT^{n};R)\rightarrow H^{t-2}(BT^{n};R),\]
        in such a way that $^{T}d_{3}^{0,*}=\nabla(-)\cdot x_{1}.$ For any ground ring $R=\mathbb{Z}$ or $\mathbb{Z}/m$ for any integer $m$.
        \item The spectral sequence degenerates at ${{^T}E}^{0,*}_{4}$. Indeed, we have $^{T}E_{\infty}^{0,*}={^{T}E_{4}}^{0,*}=\operatorname{Ker}{^Td}_{3}^{0,*}=\mathbb{Z}[v_{1}-v_{n},\cdots, v_{n-1}-v_{n}]$.
    \end{enumerate}
\end{prop}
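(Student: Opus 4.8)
The plan is to handle the two parts separately, using the already-established description of $^KE$ (Proposition \ref{prop:diff0}) and the naturality relation of Proposition \ref{pro:diff1}.

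For part (1): First I would note that the bottom row of $^TE$ reaches the $E_3$-page unchanged. Since $K(\Z,3)$ is $2$-connected there are no nonzero incoming differentials into $^TE_r^{0,*}$ (their sources lie in negative columns), and $^Td_2^{0,*}$ lands in $^TE_2^{2,*}=H^2(K(\Z,3);H^{*}(BT^n;R))=0$; hence $^TE_3^{0,t}=H^t(BT^n;R)=R[v_1,\dots,v_n]_t$, while $^TE_3^{3,*}$ is the free $R$-module $H^{*}(BT^n;R)\cdot x_1$. Because every class of $^TE_3^{0,*}$ has even total degree, the Leibniz rule makes $^Td_3^{0,*}$ a derivation of graded rings $R[v_1,\dots,v_n]\to R[v_1,\dots,v_n]\cdot x_1$, so it is determined by its values on the generators $v_i$. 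Proposition \ref{pro:diff1} (in the case $t=1$, $\xi=1$) says $^Td_3^{0,2}(v_i)$ is $^Kd_3^{0,2}(v)=x_1$ with $v$ replaced by $v_i$, hence $^Td_3^{0,2}(v_i)=x_1$ for every $i$. The unique derivation sending each $v_i$ to $x_1$ is $f\mapsto\bigl(\sum_{i=1}^n\partial f/\partial v_i\bigr)\cdot x_1=\nabla(f)\cdot x_1$, which is the asserted formula; nothing here is sensitive to whether $R=\Z$ or $R=\Z/m$.

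For part (2): The same two observations (no incoming differentials and $^Td_2^{0,*}=0$) give $^TE_4^{0,*}=\ker{^Td_3^{0,*}}$, and since the target of $^Td_3^{0,*}$ is $R$-free with the classes $v^\alpha x_1$ independent, this kernel is exactly $\ker\nabla$. To identify $\ker\nabla$ I would make the triangular — hence integral — change of variables $w_i=v_i-v_n$ for $1\le i\le n-1$ and $s=v_n$; the chain rule gives $\nabla=\partial/\partial s$ on $R[w_1,\dots,w_{n-1},s]$, so $\ker\nabla=R[w_1,\dots,w_{n-1}]=R[v_1-v_n,\dots,v_{n-1}-v_n]$. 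It then remains to show that no higher differential disturbs the bottom row, i.e. $^TE_\infty^{0,*}={^TE_4}^{0,*}$. For this I would compare with the total space: $PT^n=T^n/S^1$ is a torus of rank $n-1$, so $BPT^n\simeq(\C P^\infty)^{n-1}$ and $H^*(BPT^n;R)=R[w_1',\dots,w_{n-1}']$ is polynomial on $n-1$ classes of degree $2$. The fibre-edge homomorphism $H^*(BPT^n;R)\to H^*(BT^n;R)$ has image exactly $^TE_\infty^{0,*}$, and it coincides with $(Bq)^*$ for the quotient $q\colon T^n\to PT^n$; computing $q$ on fundamental groups shows $(Bq)^*(w_i')=v_i-v_n$, so its image is precisely $R[v_1-v_n,\dots,v_{n-1}-v_n]={^TE_4}^{0,*}$. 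Then $^TE_\infty^{0,*}\subseteq{^TE_r}^{0,*}\subseteq{^TE_4}^{0,*}$ is squeezed between equal ends, which forces equality throughout.

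The delicate point is the coefficient ring: over $\Z$ the operator $\nabla$ is not surjective, so one cannot take a shortcut through Poincar\'e series or rank counts, and it is genuinely the comparison with $BPT^n$ via the edge homomorphism that determines $^TE_\infty^{0,*}$. A more hands-on alternative — proving directly that $^Td_r^{0,*}=0$ for all $r\ge4$ — would require controlling the groups $^TE_r^{r,*}$, which is messier; the edge-homomorphism argument avoids this altogether. Everything else (identifying the relevant $E_2$-entries, the derivation property of $d_3$, and the chain-rule computation of $\nabla$) is routine.
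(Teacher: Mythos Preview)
The paper does not prove this proposition; it is cited from \cite{gu2019cohomology} without argument, so there is no in-paper proof to compare against. Your proof is correct and fills in the details cleanly. Part (1) follows exactly as you say: parity of $H^*(BT^n;R)$ kills the relevant $d_2$'s, $d_3$ is a derivation on $^TE_3^{0,*}=R[v_1,\dots,v_n]$, and Propositions \ref{prop:diff0} and \ref{pro:diff1} give $^Td_3(v_i)=x_1$, forcing $^Td_3^{0,*}=\nabla(-)\cdot x_1$. For part (2), the substitution $w_i=v_i-v_n$, $s=v_n$ reducing $\nabla$ to $\partial/\partial s$ identifies $\ker\nabla$ over $\Z$ correctly, and the edge-homomorphism comparison with $H^*(BPT^n)\cong\Z[w_1',\dots,w_{n-1}']$ (via $PT^n\cong T^{n-1}$ and the computation of $(Bq)^*$ on $H^2$) is an efficient way to establish $^TE_\infty^{0,*}={^TE_4^{0,*}}$ without chasing higher differentials. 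Your remark that a rank or Poincar\'e-series shortcut is unavailable over $\Z$ because $\nabla$ is not surjective is well taken; the edge-map argument is indeed the right tool here.
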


\begin{cor}[\cite{gu2019cohomology}, Corollary 3.4]\label{cor:d3}
    \[^{U}d_{3}^{0,*}(c_{k})=\nabla(c_{k})x_1=(n-k+1)c_{k-1}x_1.\]
\end{cor}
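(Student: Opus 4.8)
\medskip

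The plan is to derive this from the analogous differential in ${}^{T}E$ by naturality of the Serre spectral sequence. The bottom two rows of the diagram~\eqref{eq:3_by_3_diag} form a map of fibrations from $T$ to $U$ with base map the identity, hence induce a morphism of Serre spectral sequences ${}^{U}E\to{}^{T}E$ that on $E_2$-pages is the identity on the $H^{s}(K(\mathbb{Z},3))$-factor and is $B\psi^{*}\colon H^{t}(BU_n)\to H^{t}(BT^n)$ on the coefficient factor. Since $H^{1}(K(\mathbb{Z},3)) = H^{2}(K(\mathbb{Z},3)) = 0$, the $d_2$-differentials into and out of the column $s=0$ vanish, so $c_k$, regarded in ${}^{U}E_{2}^{0,2k} = H^{2k}(BU_n)$, survives to ${}^{U}E_{3}^{0,2k}$; under the comparison map it goes to $B\psi^{*}(c_k) = \sigma_k(v_1,\dots,v_n)\in{}^{T}E_{3}^{0,2k}$.

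Next I would use the commutativity of the comparison map with $d_3$ together with Proposition~\ref{prop:the_spectral_seq_TE}(1), which gives
\[
B\psi^{*}\bigl({}^{U}d_{3}^{0,2k}(c_k)\bigr) \;=\; {}^{T}d_{3}^{0,2k}(\sigma_k) \;=\; \nabla(\sigma_k)\cdot x_1 ,\qquad \nabla=\sum_{i=1}^{n}\partial/\partial v_i .
\]
The divergence is then evaluated by an elementary count: $\partial\sigma_k/\partial v_i = \sigma_{k-1}(v_1,\dots,\widehat{v_i},\dots,v_n)$, and since each squarefree monomial of degree $k-1$ omits exactly $n-(k-1)$ of the variables, summing over $i$ yields $\nabla(\sigma_k) = (n-k+1)\,\sigma_{k-1}$. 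Hence $B\psi^{*}\bigl({}^{U}d_{3}^{0,2k}(c_k)\bigr) = (n-k+1)\,\sigma_{k-1}\,x_1 = B\psi^{*}\bigl((n-k+1)\,c_{k-1}\,x_1\bigr)$.

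To finish I would pass back through injectivity. The class ${}^{U}d_{3}^{0,2k}(c_k)$ lies in ${}^{U}E_{3}^{3,2k-2}$; because $H^{1}(K(\mathbb{Z},3)) = H^{5}(K(\mathbb{Z},3)) = 0$ in the relevant range, the $d_2$'s into and out of the column $s=3$ also vanish, so ${}^{U}E_{3}^{3,2k-2} = H^{3}(K(\mathbb{Z},3))\otimes H^{2k-2}(BU_n) = \mathbb{Z}x_1\otimes H^{2k-2}(BU_n)$, and the comparison map to ${}^{T}E_{3}^{3,2k-2} = \mathbb{Z}x_1\otimes H^{2k-2}(BT^n)$ is $\mathrm{id}\otimes B\psi^{*}$, which is injective because $B\psi^{*}$ is injective. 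Therefore ${}^{U}d_{3}^{0,2k}(c_k) = (n-k+1)\,c_{k-1}\,x_1$, and the first equality in the statement is read as the definition $\nabla(c_k):=(n-k+1)c_{k-1}$. There is no deep ingredient here; the one point requiring care is the spectral-sequence bookkeeping — identifying which $H^{s}(K(\mathbb{Z},3))$ vanish so that the relevant $E_2$- and $E_3$-terms agree and the comparison map is injective on the target — and this is routine.
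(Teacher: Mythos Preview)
The paper does not give a proof of this corollary; it is quoted verbatim from \cite{gu2019cohomology} as Corollary~3.4 there. Your argument is correct and is exactly the expected one: compare ${}^{U}E$ with ${}^{T}E$ via $\Psi^{*}$, apply Proposition~\ref{prop:the_spectral_seq_TE}(1) to get ${}^{T}d_3(\sigma_k)=\nabla(\sigma_k)x_1$, compute $\nabla(\sigma_k)=(n-k+1)\sigma_{k-1}$ by the elementary count you give, and then use injectivity of $B\psi^{*}$ on the relevant $E_3$-term to conclude. One cosmetic remark: your justification that $E_{3}^{3,2k-2}=E_{2}^{3,2k-2}$ via $H^{1}=H^{5}=0$ for $K(\mathbb{Z},3)$ is fine, but the cleaner reason (and the one the paper uses implicitly) is simply that $H^{t}(BU_n)=0$ for $t$ odd, so every $E_{2}^{s,t}$ with $t$ odd vanishes and all $d_2$'s are zero; this avoids having to check the base cohomology degree by degree.
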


\subsection*{Computations in the spectral sequence $^UE$}

In order to study
$$_pH^* (BPU_n)\cong {_p[H^*(BPU_n)\lp]},$$
it suffices to look at the $p$-localized spectral sequence, where the $E_2$ page becomes
\begin{equation}\label{equation: E2 tensor form}
    (^U E^{s, t}_{2})_{(p)} = H^{s}(K(\mathbb{Z},3))_{(p)} \otimes H^{t}(BU_{n}) = H^{s}(K(\mathbb{Z},3)) \otimes H^{t}(BU_{n})_{(p)}.
\end{equation}
By abuse of notation, for the rest of this paper, we let ${^U E}, {^T E}$ and $^K E$ denote the corresponding $p$-localized Serre spectral sequences.

By Proposition \ref{prop: p local cohomology of KZ3 below 2p+5} and \eqref{equation: cohomology of BUn}, in the range $s \leq 2p+5$, the only cases in which $^U E^{s, t}_{2}$ could be nonzero are when $s = 0, 3, 2p+2, 2p+5$ and $t$ is even. 

To simplify the notations, we let 
$$M^0 = ~ ^U E^{0,2p+2}_{2}, ~ M^1 = ~ ^U E^{3,2p}_{2}, ~ M^2 = ~ ^U E^{2p+2,2}_{2}, ~ M^3 = ~ ^U E^{2p+5,0}_{2}.$$

Inspection of degrees shows that $^U E^{3, 2p}_{*}$ can receive only the $d_3$ differential and support the $d_{2p-1}$ differential. Similarly, $^U E^{2p+2,2}_{*}$ can receive only the $d_{2p-1}$ differential and support the $d_3$ differential. In addition, all $d_2$'s are trivial and therefore we have $^U E^{*,*}_{2} = ~^U E^{*,*}_{3}$.

We let $\de^0$ be the map
$$\de^0: M^0 = ~^U E^{0,2p+2}_{3} \xrightarrow{d_3} ~^U E^{3, 2p}_{3} = M^1.$$
We let $\de^1$ be the composition
$$\de^1:M^1 = ~^U E^{3, 2p}_{3} \to ~^U E^{3, 2p}_{3}/ \opn{Im}d_3  = ~ ^U E^{3,2p}_{2p-1} \xrightarrow{d_{2p-1}} ~ ^U E^{2p+2,2}_{2p-1} = \opn{Ker} d_3  \subset M^2.$$
We let $\de^2$ be the map
$$\de^2:M^2 = ~^U E^{2p+2,2}_{3} \xrightarrow{d_3} ~^U E^{2p+5,0}_{3} = M^3.$$

One immediately sees that 
\[M^0 \xrightarrow{\de^0} M^1 \xrightarrow{\de^1} M^2 \xrightarrow{\de^2} M^3\]
is a chain complex of $\Zp$-modules, which we denote by $\mathcal{M}$. We will show later that Theorem \ref{thm: main thm, 2p+3 and 2p+4 have trivial p torsion} is a consequence of the following 
\begin{prop}\label{prop: four term exact sequence}
Let $p \geq 3$ be a prime number such that $p\mid n$.  The chain complex $\mathcal{M}$ defined above is exact.
\end{prop}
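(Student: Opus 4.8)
The plan is to compute all four modules $M^0, M^1, M^2, M^3$ explicitly as free $\Zp$-modules with bases given by Chern monomials times the generators $1, x_1, y_{p,0}, x_1 y_{p,0}$ of $H^*(K(\Z,3))\lp$, and then to identify each of the three maps $\de^0, \de^1, \de^2$ with a concrete linear map whose kernel and image can be read off directly. By \eqref{equation: E2 tensor form} and Proposition \ref{prop: p local cohomology of KZ3 below 2p+5}, $M^0 = H^{2p+2}(BU_n)\lp\otimes\langle 1\rangle$, $M^1 = H^{2p}(BU_n)\lp\otimes\langle x_1\rangle$, $M^2 = H^2(BU_n)\lp\otimes\langle y_{p,0}\rangle = \Zp\langle c_1 y_{p,0}\rangle$, and $M^3 = \Zp\langle x_1 y_{p,0}\rangle$; so the right-hand end of the complex is small and easy to handle, and the work is concentrated at $M^0$ and $M^1$.

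First I would pin down $\de^0 = {^Ud_3}$ on $M^0$ using Corollary \ref{cor:d3} together with the Leibniz rule, so that $\de^0$ is (up to sign and the factor $x_1$) the operator $\nabla = \sum_i \partial/\partial v_i$ on $H^*(BU_n)\lp \subset H^*(BT^n)\lp$, restricted to the appropriate weight range; Proposition \ref{prop:the_spectral_seq_TE} identifies its kernel on $H^*(BT^n)$ as $\Zp[v_1-v_n,\dots,v_{n-1}-v_n]$, and I would intersect this with the symmetric polynomials to understand $\Ker\de^0$ and $\opn{Im}\de^0$ inside $M^1$. Next I would compute $\de^1$: by Proposition \ref{prop:diff0} and Proposition \ref{pro:diff1} the $d_{2p-1}$ differential in $^TE$ sends $x_1 v_i^{lp^e-1}$ to $v_i^{lp^e-1-(p-1)} y_{p,0}$, and pulling this back along $B\psi^*$ (the symmetric-function inclusion) computes ${^Ud_{2p-1}}$ on $^UE^{3,2p}$; here the relevant bidegree forces $2t = 2p$, i.e.\ $t = p$, so one is looking at the $d_{2p-1}$ out of classes of the form $x_1\cdot(\text{degree-}p\text{ symmetric polynomial in the }v_i)$ landing in degree $1$ times $y_{p,0}$, i.e.\ in $\Zp\langle c_1 y_{p,0}\rangle$. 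Finally $\de^2 = {^Ud_3}$ on $M^2 = \Zp\langle c_1 y_{p,0}\rangle$ is governed by Corollary \ref{cor:d3} and the fact that $d_3(y_{p,0}) = 0$, giving $\de^2(c_1 y_{p,0}) = \nabla(c_1) x_1 y_{p,0} = n\, x_1 y_{p,0}$; since $p\mid n$ this is zero after $p$-localization only if $\ldots$ — actually $n = p^r m$ is a unit times $p^r$ in $\Zp$, so $\de^2$ is multiplication by $p^r\cdot(\text{unit})$, hence \emph{injective} on $M^2 \cong \Zp$, which shows exactness at $M^2$ and $M^3$ is immediate once $\opn{Im}\de^1 = 0$ or is handled consistently; I would double-check the interaction with the $\Ker d_3 \subset M^2$ appearing in the definition of $\de^1$.

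With these three maps made explicit, exactness at $M^1$ and at $M^2$ reduces to a purely algebraic statement about the divergence operator $\nabla$ and the $d_{2p-1}$-formula acting on low-degree symmetric polynomials in $v_1,\dots,v_n$ over $\Zp$. Concretely, at $M^1$ one must show $\Ker\de^1 = \opn{Im}\de^0$: every degree-$2p$ class $\omega x_1$ (with $\omega$ a symmetric polynomial of degree $p$ in the $v_i$) that is killed by $d_{2p-1}$ must be $d_3$ of something in $M^0$, i.e.\ must lie in the image of $\nabla$ acting on degree-$(2p+2)$ symmetric polynomials; and dually at $M^2$ one must track $\opn{Im}\de^1$ inside $\Ker\de^2 = M^2$ and compare. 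I expect the main obstacle to be the bookkeeping in $M^1$: the module $H^{2p}(BU_n)\lp$ of degree-$p$ Chern monomials can be moderately large, and one must show precisely which combinations survive $d_{2p-1}$ and that these are exactly the divergences — this is where Proposition \ref{prop:diff0}'s restriction to exponents of the form $lp^e - 1$ with $e>0$, and the interplay between the Leibniz rule and the symmetrization map $B\psi^*$, does the real work and must be handled carefully (the edge cases where $p^e$-type exponents do or do not occur among degree-$p$ monomials are exactly the subtle point). I would organize this as a short sequence of lemmas: (i) explicit bases for $M^0,\dots,M^3$; (ii) matrix of $\de^0$ via $\nabla$; (iii) matrix of $\de^1$ via the pulled-back $d_{2p-1}$; (iv) $\de^2$ is injective; (v) a rank/kernel count closing exactness at $M^1$ and $M^2$.
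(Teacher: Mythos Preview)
There is a genuine gap. You misidentify $M^2$ and $M^3$: by Proposition~\ref{prop: p local cohomology of KZ3 below 2p+5} the class $y_{p,0}$ generates $H^{2p+2}(K(\Z,3))\lp\cong\Z/p$, not $\Zp$, so $M^2 = \Zp\{c_1\}\otimes\Z/p\cong\Z/p$ and likewise $M^3\cong\Z/p$. Consequently your computation $\de^2(c_1y_{p,0}) = n\,x_1y_{p,0}$ gives zero (since $p\mid n$), not an injection; in fact $\de^2=0$. Exactness at $M^2$ therefore requires $\opn{Im}\de^1 = M^2\cong\Z/p$, i.e.\ that $\de^1$ is \emph{surjective}, the opposite of what your sketch anticipates. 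The paper establishes this by explicitly computing $\de^1(c_px_1)=\binom{n-1}{p-1}c_1y_{p,0}\neq 0$ via the comparison $\Psi^*:{^UE}\to{^TE}$ and Lemma~\ref{lem:Td_2p-1}.

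This error also undermines your plan for exactness at $M^1$. A ``rank/kernel count'' cannot work as stated: $\de^1$ maps a free $\Zp$-module onto $\Z/p$, so $\opn{Ker}\de^1$ has index $p$ in $M^1$ and is not detected by $\Zp$-rank. The paper instead constructs an explicit submodule $W=Lx_1+(pc_px_1)\subset M^1$ of index $p$ (where $L$ is spanned by all degree-$2p$ Chern monomials except $c_p$), shows $W\subset\opn{Im}\de^0$ by exhibiting preimages via a section $\tau$ and the element $c_1c_p-c_{p+1}$, then applies Nakayama's lemma; combining $W\subset\opn{Im}\de^0\subset\opn{Ker}\de^1$ with $[M^1:W]=p=[M^1:\opn{Ker}\de^1]$ forces equality. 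Your outline contains the right ingredients (the $\nabla$-description of $\de^0$, the $d_{2p-1}$-formula pulled back along $B\psi^*$), but the torsion in $M^2$ changes the shape of the argument at both $M^1$ and $M^2$, and the index-$p$ phenomenon is precisely ``the subtle point'' you flag but do not resolve.
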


\begin{proof}[Proof of Theorem \ref{thm: main thm, 2p+3 and 2p+4 have trivial p torsion} assuming Proposition \ref{prop: four term exact sequence}]
Let $n = p^r m$. For $r = 0$, the theorem follows from Proposition \ref{pro:n-torsion}. In the rest of the proof we assume $r > 0$. First, we prove
$$_pH^{2p+2}(BPU_n) \cong \Z/p.$$
By Proposition \ref{prop:thm1_2}, $y_{p,0}\in ^UE_2^{2p+2,0}$ survives to a nonzero element in $H^{2p+2}(BPU_{n})$ of order $p$.  Therefore, we have 
$$^UE_{\infty}^{2p+2,0} = ~^UE_2^{2p+2,0}\cong\Z/p.$$
Since the only nontrivial entries in $^UE_2^{*,*}$ of total degree $2p + 2$ are $^UE_2^{2p+2,0}$ and $^UE_2^{0,2p+2}$, we have a short exact sequence of $\Zp$-modules
    \[0\to ~^UE_{\infty}^{2p+2,0}  \to H^{2p+2} (BPU_n)\lp \to ~^UE_{\infty}^{0,2p+2} \to 0.\]
    Since $^UE_{\infty}^{0,2p+2} \subset ~^UE_2^{0,2p+2}$ is a free $\Zp$-module, the above short exact sequence splits and we have
    \[H^{2p+2} (BPU_n)\lp \cong ~^UE_{\infty}^{2p+2,0} \oplus ~^UE_{\infty}^{0,2p+2},\]
    from which we deduce
    \[_pH^{2p+2} (BPU_n) \cong ~^UE_{\infty}^{2p+2,0} \cong \Z/p.\]
Since the row $E_{\infty}^{*,0}$ is the image of $\chi^*$, the above implies
\begin{equation}\label{eq:imageofchi}
	_pH^{2p+2} (BPU_n)  = \chi^*(H^{2p+2}(K(\Z,3))).
\end{equation}
From \eqref{eq:imageofchi} and Proposition \ref{prop: p local cohomology of KZ3 below 2p+5}, it follows that $_pH^{2p + 2}(BPU_n)$ is generated by $\delta\opn{P}^1(\bar{x}_1)$.
    
Next, we prove 
$$_pH^{2p+3}(BPU_n) = H^{2p+3}(BPU_n)\lp = 0.$$
The exactness of $\mathcal{M}$ at $M^1$ implies $^U E^{3, 2p}_{\infty} = 0$.  On the other hand, ${^UE}_2^{3,2p}$ is the only nontrivial entry in ${^UE}_2^{*,*}$ of total degree $2p+3$. Hence, we have 
    \[_pH^{2p+3}(BPU_n)\subset H^{2p+3}(BPU_n)\lp = {^UE}^{3, 2p}_{\infty} = 0.\]
Finally, we prove
$$_pH^{2p+4}(BPU_n) = 0.$$
The exactness of $\mathcal{M}$ at $M^2$ implies $^U E^{2p+2, 2}_{\infty} = 0$.  Since ${^UE}_2^{0,2p+4}$ and ${^UE}_2^{2p+2,2}$ are the only nontrivial entries in ${^UE}_2^{*,*}$ of total degree $2p+4$, we have
    \[H^{2p+4}(BPU_n)\lp\cong {^UE}_{\infty}^{0,2p+4},\]
which is torsion-free. In particular, we have $_pH^{2p+4}(BPU_n) = 0$.

\end{proof}

The proof of Proposition \ref{prop: four term exact sequence} occupies Section \ref{sec:exactness}.

\section{The proof of Proposition \ref{prop: four term exact sequence}}\label{sec:exactness}
From \eqref{equation: E2 tensor form}, we can write out the $\Zp$-modules $M^0, M^1, M^2, M^3$ more explicitly:
$$M^0 = H^{0}(K(\Z,3)) \otimes H^{2p+2}(BU_n)_{(p)}\cong H^{2p+2}(BU_n)_{(p)}$$
is the free $\Zp$-module generated by monomials in $c_1,\cdots,c_{p+1}$ in dimension $2p+2$, and
$$M^1 = H^{3}(K(\Z,3)) \otimes H^{2p}(BU_n)_{(p)} \cong H^{2p}(BU_n)_{(p)}$$
is the free $\Zp$-module generated by elements of the form $cx_1$ where $c$ is a monomial in $c_1,\cdots,c_p$ in dimension $2p$. Furthermore, we have
$$M^2 = H^{2p+2}(K(\Z,3)) \otimes H^{2}(BU_n)_{(p)} = \Zp \{c_{1}y_{p,0}\}/p\cong\Z/p$$ 
and
$$M^3 = H^{2p+5}(K(\Z,3)) \otimes H^{0}(BU_n)_{(p)} = \Zp \{x_{1}y_{p,0}\}/p\cong\Z/p.$$ 

\subsection*{The exactness of $\mathcal{M}$ at $M^2$}
\begin{lem}\label{lem:Td_2p-1}
In the spectral sequence $^TE$, we have
    \begin{equation}\label{eq:Td_3v_n}
        \begin{cases}
            v_{n}^{k}x_1 \in\opn{Im}{^Td}_3 ,\ 0 \le k \le p-2 \textrm{ or }k=p,\\
            ^{T}d^{3,*}_{2p-1}(v_{n}^{p-1}x_1) = y_{p,0}.
        \end{cases}
    \end{equation}
\end{lem}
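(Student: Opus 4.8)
The plan is to treat the two halves of the statement with the two tools that are already available: the description of ${^Td}_3$ as a ``formal divergence'' (Proposition \ref{prop:the_spectral_seq_TE}) for the assertions about $\opn{Im}{^Td}_3$, and the comparison with the path-fibration spectral sequence $^KE$ (Propositions \ref{pro:diff1} and \ref{prop:diff0}) for the value of ${^Td}_{2p-1}$ on $v_n^{p-1}x_1$.

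For the first half, recall that ${^Td}_3^{0,*}(f) = \nabla(f)\,x_1$ with $\nabla = \sum_{i=1}^n\partial/\partial v_i$, which holds over $\Zp$ as well. When $0\le k\le p-2$ the integer $k+1$ is a unit in $\Zp$, so $f := (k+1)^{-1}v_n^{k+1}$ is a genuine class in $H^{2k+2}(BT^n)_{(p)}$, and since $\partial/\partial v_i$ kills $v_n^{k+1}$ for $i\ne n$ we get $\nabla(f) = v_n^k$, hence ${^Td}_3(f) = v_n^k x_1$. Taking $f = (p+1)^{-1}v_n^{p+1}$ (note $p\nmid p+1$) gives the case $k=p$ in the same way. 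The value $k = p-1$ is genuinely excluded: the obvious primitive $p^{-1}v_n^p$ is not $p$-integral, and indeed $v_n^{p-1}x_1\notin\opn{Im}{^Td}_3$. To see the latter I would apply the diagonal ring map $B\varphi^*\colon\Zp[v_1,\dots,v_n]\to\Zp[v]$, $v_i\mapsto v$, which satisfies $B\varphi^*\circ\nabla = (\partial/\partial v)\circ B\varphi^*$; a homogeneous class $f$ of degree $2p$ has $B\varphi^*(f) = a v^p$ for some $a\in\Zp$, so $B\varphi^*(\nabla(f)) = pa\,v^{p-1}$ is $p$-divisible, whereas $B\varphi^*(v_n^{p-1}) = v^{p-1}$ is not. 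Hence $v_n^{p-1}\notin\opn{Im}\nabla$, so $v_n^{p-1}x_1\notin\opn{Im}{^Td}_3^{0,2p}$. A degree count shows that $^TE^{3,2p-2}$ can be hit only by ${^Td}_3$ coming from $^TE^{0,2p}$ and supports no differential before $d_{2p-1}$, so this proves that $v_n^{p-1}x_1$ survives as a nonzero class to the $E_{2p-1}$-page, where ${^Td}_{2p-1}$ is then defined on it.

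For the second half, I would invoke Proposition \ref{pro:diff1} with $i=n$, $t=p-1$ and $\xi = x_1$: it says that ${^Td}_{2p-1}^{3,2p-2}(v_n^{p-1}x_1)$ is simply ${^Kd}_{2p-1}^{3,2p-2}(v^{p-1}x_1)$ with $v$ replaced by $v_n$. By Proposition \ref{prop:diff0} with $l=1$ and $e=1$ (so that $lp^e-1 = p-1$) we have ${^Kd}_{2p-1}(x_1v^{p-1}) = v^{0}y_{p,0} = y_{p,0}$; this involves no $v$, so the substitution changes nothing and ${^Td}_{2p-1}^{3,2p-2}(v_n^{p-1}x_1) = y_{p,0}$. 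Since $y_{p,0}$ is the nonzero generator of $^TE_{2p-1}^{2p+2,0}\cong\Z/p$, this also re-confirms the nonvanishing of $v_n^{p-1}x_1$ on the $E_{2p-1}$-page.

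I do not expect a serious obstacle: the lemma is essentially a bookkeeping exercise combining three facts already established in the excerpt. The points that need care are tracking the $p$-integrality of the primitives under ${^Td}_3$ (pinning down exactly which $k$ work and why $k=p-1$ is different), and the short argument --- via the diagonal map $B\varphi^*$, equivalently via the spectral-sequence map induced by the column $\Phi$ of \eqref{eq:3_by_3_diag} --- that $v_n^{p-1}x_1$ persists to the page on which the $d_{2p-1}$-differential in the second half is defined.
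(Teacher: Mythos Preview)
Your argument is correct and follows essentially the same route as the paper: for $p\nmid k+1$ you exhibit $(k+1)^{-1}v_n^{k+1}$ as a ${^Td}_3$-primitive (you via the divergence formula of Proposition~\ref{prop:the_spectral_seq_TE}, the paper via the equivalent combination of Propositions~\ref{prop:diff0} and~\ref{pro:diff1}), and for the $d_{2p-1}$-value both you and the paper invoke Proposition~\ref{pro:diff1} together with the $e=l=1$ case of Proposition~\ref{prop:diff0}. Your additional verification that $v_n^{p-1}x_1\notin\opn{Im}{^Td}_3$ (hence survives to $E_{2p-1}$) is correct extra content that the paper leaves implicit.
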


\begin{proof}
    When $p \nmid k+1$,  the first formula in Proposition \ref{prop:diff0} together with  Proposition \ref{pro:diff1} imply that
    $$v_n^k x_1 = {\frac{1}{k+1}}{^Td}_3(v_n^{k+1})$$
    is in the image of ${^Td}_3$. This completes the proof for the case $0 \le k \le p-2 \textrm{ or }k=p$.
    
    The remaining case is proved by applying the second formula in Proposition \ref{prop:diff0}, taking 
    $e=l=1$, and then Proposition \ref{pro:diff1}.
\end{proof}

\begin{lem}\label{lem:image_of_delta1}
    The map $\delta^1:M^1\to M^2\cong\Z/p$ is surjective. 
\end{lem}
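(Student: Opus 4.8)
The plan is to pin down $\delta^{1}$ on one well-chosen class by comparing $^{U}E$ with the torus spectral sequence $^{T}E$, via the map $\Psi^{*}\colon {^{U}E}\to{^{T}E}$ induced by the map of fiber sequences $\Psi\colon T\to U$ in \eqref{eq:3_by_3_diag}; on $E_{2}$-pages $\Psi^{*}$ is $\mathrm{id}\otimes B\psi^{*}$, with $B\psi^{*}(c_{i})=\sigma_{i}(v_{1},\dots,v_{n})$. The first point is that $\Psi^{*}$ is injective on $M^{2}$. Indeed, by Corollary \ref{cor:d3} and $p\mid n$ we have $d_{3}(c_{1}y_{p,0})=n\,x_{1}y_{p,0}=0$, so $M^{2}={^{U}E}^{2p+2,2}_{2p-1}\cong\Z/p$ is generated by $c_{1}y_{p,0}$; in $^{T}E$ the only differential into bidegree $(2p+2,2)$ before $d_{2p-1}$ is $d_{3}$, which sends every $v_{i}y_{p,0}$ to $x_{1}y_{p,0}$, so ${^{T}E}^{2p+2,2}_{2p-1}=\{\sum_{i}a_{i}v_{i}y_{p,0}:\sum_{i}a_{i}\equiv 0\bmod p\}$; the element $\Psi^{*}(c_{1}y_{p,0})=(\sum_{i}v_{i})y_{p,0}$ lies in this group (because $p\mid n$) and is nonzero, so $\Psi^{*}|_{M^{2}}$ is injective. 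Consequently it suffices to find a single $\alpha\in M^{1}$ with $\Psi^{*}(\delta^{1}(\alpha))\neq 0$ in ${^{T}E}^{2p+2,2}_{2p-1}$; as $M^{2}\cong\Z/p$, this forces $\delta^{1}$ to be onto $M^{2}$ (and incidentally shows $\opn{Ker}d_{3}=M^{2}$).

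I would take $\alpha=c_{p}x_{1}\in M^{1}$, which makes sense since $n\ge p$. The technical core is a Leibniz computation in $^{T}E$. By Proposition \ref{prop:the_spectral_seq_TE} each difference $w_{i}:=v_{i}-v_{n}$ is a permanent cycle of $^{T}E$, and Lemma \ref{lem:Td_2p-1} gives ${}^{T}d_{2p-1}(v_{n}^{p-1}x_{1})=y_{p,0}$. In ${^{T}E}^{3,2p}_{2p-1}\cong x_{1}\otimes\bigl(H^{2p}(BT^{n})_{(p)}/\opn{Im}\nabla\bigr)$ one has $v_{n}^{p}x_{1}=0$ (as $v_{n}^{p}=\nabla(v_{n}^{p+1}/(p+1))$), so the product class $[w_{i}]\cdot[v_{n}^{p-1}x_{1}]$ is represented by $v_{n}^{p-1}w_{i}x_{1}$, and the Leibniz rule gives
\[
{}^{T}d_{2p-1}\bigl(v_{n}^{p-1}w_{i}x_{1}\bigr)=w_{i}\,y_{p,0},\qquad i=1,\dots,n-1,
\]
each summand being nonzero in ${^{T}E}^{2p+2,2}_{2p-1}$. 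It then remains to rewrite $\Psi^{*}(c_{p}x_{1})=\sigma_{p}x_{1}$ modulo $\opn{Im}\nabla$. In the coordinates $(w_{1},\dots,w_{n-1},v_{n})$ one has $\nabla=\partial/\partial v_{n}$, so every monomial $v_{n}^{j}m(w)$ with $0\le j\le p$, $j\neq p-1$, lies in $\opn{Im}\nabla$, and only the $v_{n}^{p-1}$-component of $\sigma_{p}$ survives; a short generating-function computation (specialising $v_{1}=v_{n}+1$, $v_{2}=\cdots=v_{n}$, where $\sigma_{p}=\binom{n}{p}v_{n}^{p}+\binom{n-1}{p-1}v_{n}^{p-1}$) identifies it as $\binom{n-1}{p-1}v_{n}^{p-1}\sum_{i<n}w_{i}$, whence $\Psi^{*}(c_{p}x_{1})=\binom{n-1}{p-1}\sum_{i=1}^{n-1}v_{n}^{p-1}w_{i}x_{1}$ in ${^{T}E}^{3,2p}_{2p-1}$.

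Combining these by naturality of the differentials,
\[
\Psi^{*}\bigl(\delta^{1}(c_{p}x_{1})\bigr)={}^{T}d_{2p-1}\Psi^{*}(c_{p}x_{1})=\binom{n-1}{p-1}\sum_{i<n}w_{i}y_{p,0}=\binom{n-1}{p-1}\,\Psi^{*}(c_{1}y_{p,0}),
\]
the last equality because $\sum_{i<n}w_{i}y_{p,0}=(\sum_{i}v_{i})y_{p,0}$ in ${^{T}E}^{2p+2,2}_{2p-1}$ (since $n\,v_{n}y_{p,0}=0$). Since $p\mid n$, the last base-$p$ digit of $n-1$ is $p-1$, so Lucas' theorem gives $\binom{n-1}{p-1}\equiv 1\pmod p$; in particular this class is nonzero, and by injectivity of $\Psi^{*}|_{M^{2}}$ we conclude $\delta^{1}(c_{p}x_{1})=\binom{n-1}{p-1}c_{1}y_{p,0}$ is a generator of $M^{2}\cong\Z/p$. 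Therefore $\delta^{1}$ is surjective.

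I expect the main obstacle to be the Leibniz computation in the middle paragraph: making precise the structure of ${^{T}E}^{3,2p}_{2p-1}$ and ${^{T}E}^{2p+2,2}_{2p-1}$ (i.e.\ handling $\opn{Im}\nabla$ in these two bidegrees) so that $v_{n}^{p-1}w_{i}x_{1}$ is the legitimate representative of $[w_{i}]\cdot[v_{n}^{p-1}x_{1}]$ and $w_{i}y_{p,0}$ is nonzero in the target. The remaining arithmetic — extracting the coefficient of $\sigma_{p}$ modulo $\opn{Im}\nabla$ and recognising $\binom{n-1}{p-1}$ as a $p$-adic unit — is routine, but it is precisely what makes the argument work: an unlucky congruence would have sunk it.
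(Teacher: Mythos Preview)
Your argument is correct and follows essentially the same route as the paper: both compute $\delta^{1}(c_{p}x_{1})$ by pushing through $\Psi^{*}$ to $^{T}E$, substituting $v_{i}=w_{i}+v_{n}$, isolating the $v_{n}^{p-1}$-component, applying ${}^{T}d_{2p-1}(v_{n}^{p-1}x_{1})=y_{p,0}$ and Leibniz, and then using injectivity of $\Psi^{*}$ on $M^{2}$ together with $\binom{n-1}{p-1}\not\equiv 0\pmod p$. The only cosmetic differences are that you kill the $v_{n}^{j}$ terms ($j\neq p-1$) via the slick observation $\nabla=\partial/\partial v_{n}$ in the $(w,v_{n})$-coordinates (whereas the paper quotes Lemma~\ref{lem:Td_2p-1} for this), and you extract $\binom{n-1}{p-1}$ by specialization plus Lucas rather than by the paper's direct counting.
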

\begin{proof}
    Recall the morphism of fiber sequences $\Psi$ introduced in \eqref{eq:3_by_3_diag}, and the induced morphism $\Psi^*: {^U E}\to {^T E}$ of spectral sequences. 
    
    For $1\le i \le n$, let $v'_i =v_i-v_{n}$. It follows from (2) of Proposition \ref{prop:the_spectral_seq_TE} that the $v'_i$'s are permanent cycles.
    To determine the value of $\delta^1$ at $c_{p}x_1 \in M^1$, we have
    \begin{equation}\label{eq:Psi_delta1}
        \begin{split}
             &\Psi^*\delta^1(c_{p}x_1) \\
             =\ & \Psi^* ~ {^Ud}_{2p-1}^{3,2p}(c_{p}x_1) = {^Td}_{2p-1}^{3,2p}\Psi^{*}(c_{p}x_1)\\
             =\ & {^Td}_{2p-1}^{3,2p}(\sum_{n\geq i_1 > i_2 >...> i_p \geq 1.}v_{i_1}v_{i_2}...v_{i_p}x_1)\\
             =\ & {^Td}_{2p-1}^{3,2p}(\sum_{n\geq i_1 > i_2 >...> i_p\geq 1}(v'_{i_1}+v_n)(v'_{i_2}+v_n)...(v'_{i_p}+v_n)x_1)\\
             =\ & {^Td}_{2p-1}^{3,2p}(\sum_{n\geq i_1 > i_2 >...> i_p \geq 1}\sum_{j=0}^{p}\sigma_j(v'_{i_1},\cdots,v'_{i_p})v_n^{p-j}x_1)).
        \end{split}
    \end{equation}
    where $\Psi^*: {^UE}\to {^TE}$ is the morphism of spectral sequences induced by the inclusions of maximal tori $T^n\to U_n$ and $PT^n\to PU_n$, as introduced in (\ref{eq:3_by_3_diag}), and $\sigma_i$ the elementary symmetric polynomials in $p$ variables, as in \eqref{eq:sigma_def}.

    By Lemma \ref{lem:Td_2p-1}, we simplify \eqref{eq:Psi_delta1} and obtain 
    \begin{equation}\label{eq:Psi_delta1_second}
        \Psi^{*}\delta^1(c_{p}x_1) ={^Td}_{2p-1}(\sum_{n \geq i_1 > i_2 >...> i_p \geq 1}\sigma_1(v'_{i_1},\cdots,v'_{i_p})v_n^{p-1}x_1).
    \end{equation}
    To proceed, we evaluate the expression 
    $$\sum_{n\geq i_1 > i_2 >...> i_p \geq 1}\sigma_1(t_{i_1},\cdots,t_{i_p})$$
    for variables $t_i,\ 1\leq i\leq n$.  Since it is multi-linear and symmetric in the variables $t_1,\cdots,t_n$, we have
    \[\sum_{n\geq i_1 > i_2 >...> i_p \geq 1}\sigma_1(t_{i_1},\cdots,t_{i_p}) = \lambda\sum_{i=1}^nt_i\] 
    for some $\lambda\in\Z$. Taking the substitution $t_1=\cdots t_n=1$ and comparing both sides of the above, we obtain 
    \[\lambda=\frac{p}{n}\binom{n}{p}=\binom{n-1}{p-1}\not\equiv 0\pmod{p}\]
    and 
    \begin{equation}\label{eq:evaluate_sigma1}
        \sum_{n\geq i_1 > i_2 >...> i_p \geq 1}\sigma_1(t_{i_1},\cdots,t_{i_p}) = \binom{n-1}{p-1}\sum_{i=1}^nt_i.
    \end{equation}
   Consider the following commutative diagram:
    \begin{equation*}
        \begin{tikzcd}
            M^1 = {^UE}_2^{3,2p}\arrow[r,"\Psi^*"]\arrow[d] &{^TE}_{2}^{3,2p}\arrow[d] \\
             {^UE}_{2p-1}^{3,2p}\arrow[r,"\Psi^*"]\arrow[d,"^Ud_{2p-1}"] &{^TE}_{2p-1}^{3,2p}\arrow[d, "^Td_{2p-1}"] \\
           {^UE}_{2p-1}^{2p+2,2}\arrow[r,"\Psi^*"]\arrow[d, hook] &{^TE}_{2p-1}^{2p+2,2}\arrow[d, hook] \\
            M^2 = {^UE}_2^{2p+2,2}\arrow[r,"\Psi^*"] &{^TE}_{2}^{2p+2,2}
        \end{tikzcd}
    \end{equation*} 
    where the composition of the left vertical maps is $\de^1$ and we resume the computation of $\Psi^{*}\delta^1(c_{p}x_1)$ started in \eqref{eq:Psi_delta1_second}:
    \begin{equation}\label{eq:Psi_delta1_third}
        \begin{split}
             &\Psi^{*}\delta^1(c_{p}x_1)  \\
            =\ &{^Td}_{2p-1}(\binom{n-1}{p-1}\sum_{i=1}^n v_i'v_n^{p-1}x_1) \ \ \ (\textrm{by }\eqref{eq:evaluate_sigma1}) \\
            =\ &\binom{n-1}{p-1} \sum_{i=1}^n v'_i y_{p,0} \ \ (\textrm{since $v_i'$'s are permanent cocycles})\\
            =\ &\binom{n-1}{p-1}\sum_{i=1}^n v_i y_{p,0} \ \ (\text{since $y_{p,0}$ is $p$-torsion}) \\
            =\ &\Psi^{*}(\binom{n-1}{p-1}c_1y_{p,0}).
        \end{split}
    \end{equation}

    By the injectivity of  
    \[\Psi^{*}: M^2 = {^UE}_2^{2p+2,2}\to {^TE}_2^{2p+2,2}\]
    together with \eqref{eq:Psi_delta1_third}, we have 
    \[\delta^1(c_{p}x_1) = \binom{n-1}{p-1} c_{1}y_{p,0} \neq 0\]
    and we conclude.
\end{proof}

\begin{lem}\label{lem:exactness_at_M2}
    The chain complex $\mathcal{M}$ is exact at $M^2$.
\end{lem}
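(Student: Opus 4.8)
The plan is to read off exactness at $M^2$ from what has already been established, without computing any new differential. By construction $\mathcal{M}$ is a chain complex, so $\opn{Im}\delta^1 \subseteq \ker\delta^2$, and of course $\ker\delta^2 \subseteq M^2$. On the other hand, Lemma \ref{lem:image_of_delta1} asserts that $\delta^1$ is surjective, i.e. $\opn{Im}\delta^1 = M^2$. Hence the chain of inclusions $\opn{Im}\delta^1 \subseteq \ker\delta^2 \subseteq M^2$ collapses to a chain of equalities, and in particular $\opn{Im}\delta^1 = \ker\delta^2$, which is exactly exactness of $\mathcal{M}$ at $M^2$. There is no genuine obstacle at this step: all the real content lives in Lemma \ref{lem:image_of_delta1}, and the present lemma is a purely formal consequence of it (combined with the fact, already noted in the text, that $\mathcal{M}$ is a chain complex).

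If a self-contained verification is preferred, I would instead show directly that $\delta^2 = 0$. Since $M^2 \cong \Z/p$ is generated by $c_1 y_{p,0}$ and $\delta^2$ is the differential ${^Ud}_3\colon {^UE}_3^{2p+2,2} \to {^UE}_3^{2p+5,0}$, it suffices to compute ${^Ud}_3(c_1 y_{p,0})$. The class $y_{p,0}$ lies in the bottom row ${^UE}_*^{*,0}$ and is therefore a permanent cycle, so the Leibniz rule together with Corollary \ref{cor:d3} (which gives ${^Ud}_3(c_1) = n x_1$) yields ${^Ud}_3(c_1 y_{p,0}) = n\, x_1 y_{p,0}$. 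As $x_1 y_{p,0}$ generates $M^3 \cong \Z/p$ and $p \mid n$, this vanishes; hence $\delta^2 = 0$ and $\ker\delta^2 = M^2 = \opn{Im}\delta^1$. Either route gives the lemma immediately, so the proof will be very short.
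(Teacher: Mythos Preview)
Your argument is correct and matches the paper's own proof essentially verbatim: the paper deduces $\delta^2=0$ from Lemma~\ref{lem:image_of_delta1} together with the chain-complex property, and also notes the alternative direct computation via Corollary~\ref{cor:d3}. Both of your routes are exactly the two the paper gives.
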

\begin{proof}
    By Lemma \ref{lem:image_of_delta1}, and the fact that $\mathcal{M}$ is a chain complex, we have $\delta^2 = 0$ and the lemma follows.
    
    Alternatively, one may compute $\delta^2 = d_3^{2p+2,2}$ directly with Corollary \ref{cor:d3} and obtain the same result.
\end{proof}

\subsection*{The exactness of $\mathcal{M}$ at $M^1$}
Recall that the $\Zp$-module $M^1$ is freely generated by elements of the form $cx_1$ for
\[c\in S' := \{c_1^{i_1}c_2^{i_2} \cdots c_p^{i_p} \mid i_k\geq 0 ,\ \sum_k ki_k = p\}.\]
Indeed, $S'$ is simply the set of monomials in $c_1,c_2\cdots,c_n$ in $H^{2p}(BU_n)$. We define a total ordering $\mathfrak{O}$ on monomials in $c_1,c_2\cdots,c_n$ as follows. We assert
\[c_1^{i_1}c_2^{i_2} \cdots c_p^{i_p} > c_1^{j_1}c_2^{j_2} \cdots c_p^{j_p}\]
if and only if 
\begin{enumerate}
    \item there is at least one $k$ such that $i_k\neq j_k$, and
    \item for the smallest such $k$, we have $i_k>j_k$.
\end{enumerate}
Let $S := S'- \{c_p\}$. Then $\mathfrak{O}$ defines total orderings on $S$, $S'$ and $S'x_1$ as well. To compare $cx_1,c'x_1\in S'x_1$, we assert $cx_1>c'x_1$ if and only if $c>c'$. 

Let $L$ be the $\Zp$-submodule of $H^{2p}(BU_n)\lp$ spanned by $S$. We define a $\Zp$-linear map 
\[\tau: L\to M^0 = H^{2p+2}(BU_n)\lp\]
as follows. Each element in $S$ is of the form $c_1^{i_1}c_2^{i_2}\cdots c_k^{i_k}$ such that $k<p$ and $i_k>0$, and we define
\[\tau(c_1^{i_1}c_2^{i_2}\cdots c_k^{i_k}) := (c_1^{i_1}c_2^{i_2}\cdots c_{k-1}^{i_{k-1}})(c_k^{i_k-1}c_{k+1}).\]

\begin{lem}\label{lem:delta0_tau}
    Let $\bar{\tau}:L/pL\to M^0/pM^0$ and $\bar{\delta}^0:M^0/pM^0\to M^1/pM^1$ denote the mod $p$ reductions of $\tau$ and $\delta^0$, respectively. Then the image of the composition 
    \[L/pL\xrightarrow{\bar{\tau}} M^0/pM^0 \xrightarrow{\bar{\delta}^0} M^1/pM^1\]
    is $Lx_1/pLx_1$. In particular, we have
    \begin{equation}\label{eq:delta_0_tau_image}
        \opn{Im}\delta^0\tau \subset W := Lx_1 + (pc_px_1) \subset M^1.
    \end{equation}
\end{lem}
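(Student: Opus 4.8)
The plan is to reduce the lemma to one explicit Leibniz-rule computation of $\de^0\tau$ on the basis $S$ of $L$, and then read off both assertions from the triangular shape of the answer. Recall that $\de^0={^Ud}_3^{0,2p+2}$ is a derivation and, by Corollary \ref{cor:d3}, satisfies $\de^0(c_j)=(n-j+1)c_{j-1}x_1$ on generators, with the convention $c_0=1$, so that $\de^0(c_1)=nx_1$. Fix a basis element $c=c_1^{i_1}\cdots c_k^{i_k}\in S$, so $1\le k\le p-1$ and $i_k\ge 1$, with $\tau(c)=c_1^{i_1}\cdots c_{k-1}^{i_{k-1}}c_k^{i_k-1}c_{k+1}$. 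Applying the Leibniz rule to $\de^0\tau(c)={^Ud}_3(\tau(c))$ factor by factor, the terms fall into three groups: differentiating the single factor $c_{k+1}$ returns $(n-k)\,c\,x_1$; differentiating a factor $c_1$, if one is present, contributes a term whose coefficient is a multiple of $n$; and differentiating a factor $c_j$ with $2\le j\le k$ contributes a term $\lambda\,c'\,x_1$ with $\lambda\in\Zp$, where $c'$ is obtained from $\tau(c)$ by replacing one $c_j$ with $c_{j-1}$.

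Three elementary observations then do the work. First, since $p\mid n$ and $1\le k\le p-1$, the coefficient $n-k\equiv-k\not\equiv 0\pmod p$ is a unit in $\Zp$, whereas every coefficient in the second group is divisible by $n$, hence by $p$. Second, each $c'$ in the third group is a degree-$2p$ monomial in $c_1,\dots,c_p$ that still carries a factor of index $j-1\le p-2$; in particular $c'\ne c_p$, so $c'\in S$. Third, in the ordering $\mathfrak O$ we have $c'>c$, since $c'$ and $c$ share all exponents at indices below $j-1$ while the exponent of $c'$ at index $j-1$ is one larger. Reducing modulo $p$ therefore annihilates the second group and gives, for every $c\in S$,
\[\bar{\de}^0\bar\tau(c) = (n-k)\,\bar c\,\bar x_1 + \sum_{\substack{c'\in S\\ c'>c}} a_{c'}\,\bar{c}'\,\bar x_1, \qquad a_{c'}\in\Fp,\]
with the leading coefficient $n-k$ invertible in $\Fp$.

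Both parts of the lemma then follow at once. The displayed identity exhibits $\bar{\de}^0\bar\tau(c)$ as a combination of classes $\bar{c}'\bar x_1$ with $c'\in S$, so $\opn{Im}\bar{\de}^0\bar\tau\subset Lx_1/pLx_1$; lifting, $\de^0\tau(c)\in Lx_1+pM^1$, and since $M^1$ is free on $S'x_1=Sx_1\sqcup\{c_px_1\}$ we get $Lx_1+pM^1=W$, which is the ``in particular'' clause. (Concretely, the only $c\in S$ whose image $\de^0\tau(c)$ actually involves $c_px_1$ is $c=c_1c_{p-1}$, where $\tau(c)=c_1c_p$ and the coefficient of $c_px_1$ equals $n\in p\Zp$.) For equality of images, order the finite basis $S$ of $L/pL$ decreasingly by $\mathfrak O$; the identity then says the matrix of $\bar{\de}^0\bar\tau: L/pL\to Lx_1/pLx_1$ is triangular with invertible diagonal entries $n-k$, hence invertible over $\Fp$, so $\bar{\de}^0\bar\tau$ maps onto $Lx_1/pLx_1$, as claimed.

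The only real labor is the bookkeeping in the Leibniz expansion, and the one step that genuinely needs care is verifying that no $c_px_1$ or $c_{p+1}x_1$ term survives the mod-$p$ reduction --- this is where the hypothesis $p\mid n$ gets used twice (to make $n-k$ a unit, and to annihilate the ``lower $c_1$ to $c_0$'' contributions), and it is the step most prone to index slips at the boundary cases $j=1$, $j=k$, and $k=p-1$. I do not anticipate a conceptual obstacle beyond this routine case analysis.
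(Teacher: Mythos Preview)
Your argument is correct and follows essentially the same approach as the paper: expand $\de^0\tau(c)$ via the Leibniz rule and Corollary~\ref{cor:d3}, observe that modulo $p$ the only surviving terms are $(n-k)cx_1$ and strictly $\mathfrak{O}$-higher terms in $Sx_1$, and read off both assertions from the resulting upper-triangular matrix with invertible diagonal. The paper's write-up is slightly more compressed (it records the $(N+1)\times N$ matrix directly rather than separating out the three groups of Leibniz terms), but the content is the same.
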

\begin{proof}
    Consider the $\Zp$-basis $S$, $S'x_1$ for $L$ and $M^1$, respectively, both in the descending order with respect to the ordering $\mathfrak{O}$. Notice that $c_px_1$ is the smallest element in $S'$. We study the $(N+1)\times N$ matrix $A$ of the map 
    \[\delta^0\tau: L\to M^1\]
    with respect to these basis, where $N$ is the cardinality of $S$.
    
    Consider an arbitrary element 
    \[c:=c_1^{i_1}\cdots c_k^{i_k}\in S\]
    with $k<p$ and $i_k>0$. By Corollary \ref{cor:d3} and the Leibniz's formula, we have 
    \begin{equation*}
        \begin{split}
            & \delta^0\tau(c) = \delta^0(c_1^{i_1}\cdots c_{k-1}^{i_{k-1}}c_k^{i_k-1}c_{k+1})\\
            =&
            \begin{cases}
                (n-k)cx_1 + n i_1 c_1^{i_1-1}c_2^{i_2}\cdots c_k^{i_k-1}c_{k+1}x_1 + (\textrm{higher order terms}),\ i_1>0,\\
                (n-k)cx_1 + (\textrm{higher order terms}),\ i_1=0.
            \end{cases}
        \end{split}
    \end{equation*}
    In both cases, we have
    \begin{equation*}
        \delta^0\tau(c) \equiv (n-k)cx_1 + (\textrm{higher order terms})\pmod{p}.
    \end{equation*}
    Therefore, the matrix $A$ satisfies
    \begin{equation*}
        A \equiv
        \begin{pmatrix}
            \lambda_1 & *         & \cdots    & *       \\
                    0 & \lambda_2 &      *    & \vdots  \\
                    0 &         0 & \ddots    & *       \\
                    0 & \cdots    & 0         &\lambda_N\\
                    0 &         0 & \cdots    &0
        \end{pmatrix}\pmod{p},
    \end{equation*}
    where the $\lambda_i$'s are of the form $n-k$ for $k<p$, which are invertible in $\Zp$, and we have verified that the image of the composition 
    \[L/pL\xrightarrow{\bar{\tau}} M^0/pM^0 \xrightarrow{\bar{\delta^0}} M^1/pM^1\]
    is $Lx_1/pLx_1$. The equation \eqref{eq:delta_0_tau_image} follows from the above and the fact 
    \[M^1 = Lx_1 + (c_px_1).\]
\end{proof}

\begin{lem}\label{lem:V_to-W}
    Consider the $\Zp$-submodule $V=\tau(L)+(c_1c_p-c_{p+1})$ of $M^0$. We have $\delta^0(V)\subset W$ where 
    \[W := Lx_1 + (pc_px_1)\subset M^1\]
    is the $\Zp$-submodule of $M^1$ defined in Lemma \ref{lem:delta0_tau}.
\end{lem}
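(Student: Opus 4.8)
The plan is to reduce the statement to two checks. Since $V$ is, by definition, the $\Zp$-submodule of $M^0$ spanned by $\tau(L)$ together with the single element $c_1c_p-c_{p+1}$, and since $\delta^0$ is $\Zp$-linear and $W$ is a $\Zp$-submodule, it suffices to verify (i) $\delta^0(\tau(L))\subset W$ and (ii) $\delta^0(c_1c_p-c_{p+1})\in W$. Claim (i) is nothing new: it is exactly $\opn{Im}\delta^0\tau\subset W$, which was already established as \eqref{eq:delta_0_tau_image} in Lemma \ref{lem:delta0_tau}. So the only genuine work is claim (ii).

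For (ii) I would compute $\delta^0(c_1c_p-c_{p+1})$ directly. Recall $\delta^0$ is the differential $^Ud_3^{0,2p+2}$, which obeys the Leibniz rule, and by Corollary \ref{cor:d3} we have $^Ud_3^{0,*}(c_k)=(n-k+1)c_{k-1}x_1$; in particular $\delta^0(c_1)=nx_1$, $\delta^0(c_p)=(n-p+1)c_{p-1}x_1$, and $\delta^0(c_{p+1})=(n-p)c_px_1$. (Each summand carries exactly one factor of $x_1$, so no $x_1^2$ term arises, and since $|c_1|$ is even there is no Leibniz sign.) Hence
\[\delta^0(c_1c_p)=\delta^0(c_1)\,c_p+c_1\,\delta^0(c_p)=n\,c_px_1+(n-p+1)\,c_1c_{p-1}x_1,\]
and therefore
\[\delta^0(c_1c_p-c_{p+1})=\bigl(n-(n-p)\bigr)c_px_1+(n-p+1)\,c_1c_{p-1}x_1=p\,c_px_1+(n-p+1)\,c_1c_{p-1}x_1.\]

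To conclude, I would observe that $c_1c_{p-1}$ is a degree-$2p$ monomial in $c_1,\dots,c_p$ that is distinct from $c_p$, hence lies in $S$, so $(n-p+1)\,c_1c_{p-1}x_1\in Lx_1$; meanwhile $p\,c_px_1\in(pc_px_1)$. Thus $\delta^0(c_1c_p-c_{p+1})\in Lx_1+(pc_px_1)=W$, which together with (i) gives $\delta^0(V)\subset W$. There is essentially no obstacle here; the only point worth flagging is that the coefficient of $c_px_1$ comes out to be exactly $n-(n-p)=p$, which is precisely why $W$ must be enlarged from $Lx_1$ by the extra generator $pc_px_1$ rather than $c_px_1$ — this is the feature that will matter in the subsequent diagram chase. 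If one wishes to be exhaustive, the boundary case $n=p$ (where $c_{p+1}=0$) is covered by the same computation with $c_{p+1}$ set to zero, and the conclusion is unchanged.
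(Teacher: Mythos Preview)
Your proof is correct and follows the paper's approach exactly: the paper also splits into the two checks $\delta^0(\tau(L))\subset W$ (quoting Lemma \ref{lem:delta0_tau}) and $\delta^0(c_1c_p-c_{p+1})=(n-p+1)c_1c_{p-1}x_1+pc_px_1\in W$, the latter being precisely the computation you carried out. Your version is slightly more explicit in showing the Leibniz-rule expansion and in flagging the edge case $n=p$, but the argument is the same.
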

\begin{proof}
    By Lemma \ref{lem:delta0_tau} we have $\delta^0(\tau(L)) \subset W$. On the other hand, we have
    \begin{equation}\label{eq:c_1c_p}
        \delta^0(c_1c_p-c_{p+1}) = (n-p+1)c_1c_{p-1}x_1 + pc_px_1\in W,
    \end{equation}
    and we conclude.
\end{proof}

\begin{lem}\label{lem:exact_at_M1}
    The chain complex $\mathcal{M}$ is exact at $M^1$.
\end{lem}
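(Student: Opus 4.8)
Since $\mathcal M$ is a chain complex, $\opn{Im}\delta^0\subseteq\ker\delta^1$ is automatic, so what must be proved is $\ker\delta^1\subseteq\opn{Im}\delta^0$. The plan is to identify \emph{both} of these submodules of $M^1$ with $W=Lx_1+(pc_px_1)$. Recall $M^1=Lx_1\oplus\Zp c_px_1$ and $W=Lx_1\oplus\Zp(pc_px_1)$, so $M^1/W\cong\Zp c_px_1/\Zp(pc_px_1)\cong\Z/p$, i.e.\ $[M^1:W]=p$; and since $\delta^1\colon M^1\to M^2\cong\Z/p$ is surjective by Lemma~\ref{lem:image_of_delta1}, also $[M^1:\ker\delta^1]=p$. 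Thus it will suffice to prove $\opn{Im}\delta^0\subseteq W$, $\ker\delta^1=W$, and $W\subseteq\opn{Im}\delta^0$: the last two give $W\subseteq\opn{Im}\delta^0\subseteq\ker\delta^1=W$, hence equality.

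The inclusion $\opn{Im}\delta^0\subseteq W$ is a direct check. By Corollary~\ref{cor:d3} and the Leibniz rule, for a monomial $\mu$ in $c_1,\dots,c_{p+1}$ of degree $2p+2$, $\delta^0(\mu)$ is a $\Zp$-combination of the classes $\mu'x_1$ where $\mu'$ ranges over the degree-$2p$ monomials obtained from $\mu$ by replacing one factor $c_j$ by $c_{j-1}$ (with $c_0=1$). Such a $\mu'$ can equal $c_p$ only for $\mu=c_1c_p$ (from $j=1$, with coefficient $n$) or $\mu=c_{p+1}$ (from $j=p+1$, with coefficient $n-p$), and since $p\mid n$ both of these coefficients lie in $p\Zp$; every other monomial $\mu$ has $\delta^0(\mu)\in Lx_1$. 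Hence $\delta^0(\mu)\in W$ for all $\mu$, so $\opn{Im}\delta^0\subseteq W$.

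Next, $\ker\delta^1=W$. As $M^2$ is $p$-torsion, $pM^1\subseteq\ker\delta^1$. By Lemma~\ref{lem:delta0_tau} the image of $\bar\delta^0\bar\tau\colon L/pL\to M^1/pM^1$ is $Lx_1/pLx_1$, so $Lx_1\subseteq\opn{Im}\delta^0+pM^1\subseteq\ker\delta^1$; together with $\Zp(pc_px_1)\subseteq pM^1\subseteq\ker\delta^1$ this gives $W\subseteq\ker\delta^1$, and since $[M^1:W]=p=[M^1:\ker\delta^1]$ the two submodules coincide. It remains to prove $W\subseteq\opn{Im}\delta^0$. Reducing the right-exact sequence $M^0\xrightarrow{\delta^0}M^1\to\opn{coker}\delta^0\to 0$ modulo $p$ gives $\opn{coker}\delta^0\otimes\Z/p=\opn{coker}\bar\delta^0$; by Lemma~\ref{lem:delta0_tau} and the inclusion $\opn{Im}\delta^0\subseteq W$ just shown, $\opn{Im}\bar\delta^0=Lx_1/pLx_1$ and hence $\opn{coker}\bar\delta^0=M^1/(Lx_1+pM^1)=M^1/W\cong\Z/p$. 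As $\opn{Im}\delta^0\supseteq\delta^0(V)$ for the rank-$(N+1)$ free module $V$ of Lemma~\ref{lem:V_to-W}, on which $\delta^0$ is injective, $\opn{coker}\delta^0$ is finite; being cyclic (Nakayama) with mod-$p$ reduction $\Z/p$, it is $\cong\Z/p^{k}$ for some $k\geq 1$, and exactness at $M^1$ is equivalent to $k=1$. To get $k=1$ I would compute $\opn{coker}\delta^0$ explicitly: starting from $\delta^0(V)\subseteq W$, and augmenting the generating set of $V$ by a suitable further monomial of $M^0$ (for $p=3$, by $c_2^2$), exhibit a maximal minor of the matrix of $\delta^0$ with $p$-adic valuation exactly $1$, organizing the calculation via the triangularity of $\bar\delta^0\bar\tau$ from Lemma~\ref{lem:delta0_tau} and the formula \eqref{eq:c_1c_p}.

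The hard part is this last step, and in particular one cannot shortcut it by asserting $\delta^0(V)=W$: although this holds in many cases, it can fail (already for $p=3$, $n=12$, where $\delta^0(V)$ has index $p$ in $W$), because over $\Zp$ — unlike mod $p$ — the differential $\delta^0$ introduces lower-order terms with coefficients divisible by $n$, which prevent the matrix of $\delta^0|_V$ from being unimodular. The computation of $\opn{coker}\delta^0$ therefore requires carefully bookkeeping $\delta^0$ on a full generating set of $M^0$ and tracking exactly these $n$-divisible terms, in order to confirm that $\opn{coker}\delta^0$ has order exactly $p$.
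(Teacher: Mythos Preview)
Your proposal stops short of a proof: the decisive inclusion $W\subseteq\opn{Im}\delta^0$ (equivalently, that $\opn{coker}\delta^0$ has order exactly $p$) is reduced to an explicit computation that you describe but never carry out. Everything before that point is fine---the containments $\opn{Im}\delta^0\subseteq W$ and $\ker\delta^1=W$ are established cleanly, and the reduction to ``$\opn{coker}\delta^0\cong\Z/p^k$ for some $k\ge 1$'' is correct---but without the final step the lemma is not proved. (A minor side remark: you assert that $\delta^0$ is injective on $V$ in order to get finiteness of $\opn{coker}\delta^0$, but you do not justify this either.)

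It is worth recording, though, that your warning against the shortcut ``$\delta^0(V)=W$'' is exactly on target, and that this is precisely the route the paper takes. The paper argues that $\bar\delta^0_V:V/pV\to W/pW$ is surjective, first claiming $Lx_1/pLx_1\subset\opn{Im}\bar\delta^0_V$ by appeal to Lemma~\ref{lem:delta0_tau}, then invoking \eqref{eq:c_1c_p}, and finally Nakayama. But Lemma~\ref{lem:delta0_tau} computes the image of $\bar\delta^0\bar\tau$ in $M^1/pM^1$, not in $W/pW$; since the natural map $W/pW\to M^1/pM^1$ kills the nonzero class $[pc_px_1]$, the inclusion does not lift automatically. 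Your counterexample $p=3$, $n=12$ is correct: there $\bar\delta^0_V(c_1c_3)=c_1c_2x_1+[3c_3x_1]$ and $\bar\delta^0_V(c_4)=0$ in $W/3W$, so $c_1c_2x_1\notin\opn{Im}\bar\delta^0_V$, and one checks $[W:\delta^0(V)]=3$ directly from the $3\times 3$ matrix. Thus the paper's Nakayama step does not go through as written; both your attempt and the paper's argument are incomplete at the same place. A repair must bring in generators of $M^0$ outside $V$---for instance, for $p=3$ and $n=12$ the monomial $c_2^2$ does the job, since $\delta^0(c_2^2)=2(n-1)c_1c_2x_1$ with $2(n-1)$ a unit in $\Zp$.
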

\begin{proof}
    By Lemma \ref{lem:V_to-W}, the restriction of $\delta^0$ to $V$ has image in $W$. Therefore, we write $\delta^0_V := \delta^0|_V: V\to W$ and consider its mod $p$ reduction \[\bar{\delta}^0_V: V/pV\to W/pW = Lx_1/pLx_1 + (pc_px_1)/(p^2c_px_1).\]
    By Lemma \ref{lem:delta0_tau}, we have $Lx_1/pLx_1\subset \opn{Im}{\bar{\delta}^0|_V}$. 
    
    By $Lx_1/pLx_1\subset \opn{Im}{\bar{\delta}^0_V}$ and \eqref{eq:c_1c_p}, we have $[pc_px_1]\in \opn{Im}{\bar{\delta}^0_V}$, where $[pc_px_1]$ is the class in $W/pW$ represented by $pc_px_1$. Therefore, $\bar{\delta}^0_V: V/pV\to W/pW$ is surjective. By Nakayama's lemma in commutative algebra (Thoerem 2.2, Chapter 1, \cite{matsumura1989commutative}), $\delta^0_V: V\to W$ is surjective.
    
    Therefore, we have
    \begin{equation}\label{eq:exact_at_M1_1}
        \opn{Im}\delta^0 \supset\opn{Im}\delta^0_V = W = Lx_1 + (pc_px_1).
    \end{equation}
    On the other hand, we have $\opn{Ker}\delta^1 \supset \opn{Im}\delta^0$, and therefore $\opn{Ker}\delta^1 \supset W$. Now, by Lemma \ref{lem:image_of_delta1}, we have 
    \[\Z/p\cong M^1/(L + (pc_px_1)) = M^1/W \to M^1/\opn{Ker}\delta^1\cong\Z/p,\]
    where the arrow is the tautological quotient map, which is surjective. Therefore, the above composition is a bijection. It follows that we have
    \begin{equation}\label{eq:exact_at_M1_2}
        W = \opn{Ker}\delta^1 \supset \opn{Im}\delta^0,
    \end{equation}
    and the lemma follows from \eqref{eq:exact_at_M1_1} and \eqref{eq:exact_at_M1_2}.
\end{proof}

Lemma \ref{lem:exact_at_M1} and Lemma \ref{lem:exactness_at_M2} complete the proof of Proposition \ref{prop: four term exact sequence}.

\bibliographystyle{plain}
\bibliography{RefpPrimaryBPU}

\end{document}